\newtheorem{theorem}{Theorem}[section]
\newtheorem{assumption}[theorem]{Assumption}
\newtheorem{definition}[theorem]{Definition}
\newtheorem{lemma}[theorem]{Lemma}
\newtheorem{remark}[theorem]{Remark}
\numberwithin{equation}{section}
\newenvironment{proof}[1][Proof]{\textbf{#1.} }{\ \rule{0.5em}{0.5em}}
\newcommand{\Li}{{\rm L}}
\newcommand{\W}{{\rm W}}
\newcommand{\loc}{{\rm loc}}
\begin{document}

\title{\textbf{Positively Invariant Subset for Non-Densely Defined Cauchy Problems}}

\author{Pierre Magal$^{{\rm (a)}}$ and Ousmane Seydi$^{{\rm (b)}}$  \\
{\small $^{{\rm (a)}}$ \textit{Univ. Bordeaux, IMB, UMR 5251, F-33400 Talence, France} } \\
{\small \textit{CNRS, IMB, UMR 5251, F-33400 Talence, France.}} \\
{\small $^{{\rm (b)}}$ \textit{D\'epartement Tronc Commun,\'Ecole Polytechnique de Thi\`es,
S\'en\'egal}} 
}

\maketitle
\begin{abstract}
Under weak Hille-Yosida conditions and using a generalized notion of sub tangential condition,  we prove the positive invariance of a closed subset by the semiflow generated by a semi-linear non densely defined Cauchy problem. A simple remark shows that the sufficient condition for the positivity of the semiflow implies our sub tangentiality  condition. But the sub tangential condition applies to a much larger class of closed positively invariant subset. Our results can be applied to hyperbolic and parabolic partial differential equations as well as functional differential equations.  As an illustration we apply our results to an age-structured equation in $L^p$ spaces which is only defined on a closed subset of $L^p$.
\end{abstract}
\vspace{0.2in}\noindent \textbf{Key words}. Semilinear differential equations, non-dense domain, integrated semigroup, positively invariant subset, age structured population dynamics models.

\vspace{0.1in}\noindent \textbf{AMS Subject Classification}. 37N25, 92D25, 92D30

%\tableofcontents 

\section{Introduction}

In this article we consider an abstract semi-linear Cauchy problem
\begin{equation}\label{1.1}
\dfrac{du(t)}{dt}=Au(t)+F(t,u(t)), \text{ for } t \geq 0, \text{ with } u(0)=u_0 \in \overline{D(A)},
\end{equation}
where $A:D(A) \subset X \to X$ is a linear operator on a Banach space $X$, and $F:[0, \infty) \times \overline{D(A)} \to X$ is continuous. We assume that the map $x \to F(t,x)$ is Lipschitz on the bounded sets of $\overline{D(A)}$ uniformly with respect to $t$ in a bounded interval of $[0, \infty)$. We point out that $D(A)$ is not necessarily dense in $X$ and $A$ is not necessarily a Hille-Yosida operator.

The invariance of subset for differential equation has a long history which starts with the seminal paper of the Japanese mathematician Nagumo \cite{Nagumo} in 1942. The result for ordinary differential equations was rediscovered later on by Brezis \cite{Brezis} and Hartman \cite{Hartman} and was further extended to ordinary differential equation in ordered Banach spaces by Walter \cite{Walter71} and Redheffer and Walter \cite{Redheffer-Walter75}. Several extensions to partial differential equations were proposed later on by Redheffer and Walter  \cite{Redheffer-Walter77} and Martin \cite{Martin79} for parabolic equations, etc. Martin and Smith \cite{Martinsmith} further investigated comparison/differential inequalities and invariant sets for abstract functional differential equations and reaction-diffusion systems that have time delays in the nonlinear reaction terms, and their developed results have had many applications. We refer to the book of Pavel and Motreanu \cite{Pavel-Motreanu} for an extensive study of densely defined semi-linear Cauchy problem. In \cite{Pavel-Motreanu} the authors studied the positive invariance for general closed subset subjected to tangency condition.  They also conidered positive invariance of time dependent closed subset and extended their results to semilinear differential inclusion problems. The case of closed convex subset for non-densely defined Cauchy problems with a Hille-Yosida linear operator perturbed by Lipschitz continuous non linear map has been studied by Thieme \cite{Thieme90a}. The goal of this article is to extend the results of Thieme \cite{Thieme90a} from the Hille-Yosida case to the non Hille-Yosida case. It is worth noting that the non Hille-Yosida case induces several difficulties due to the problem of non uniform boundedness of $\lambda (\lambda-A)^{-1}$ whenever $\lambda$ becomes. To overcome these difficulties we use a somewhat different approach compared to Thieme \cite{Thieme90a} combined with some generalization of the estimates on the integrated semigroup from the space of continuous functions to the space of regulated functions. Thank to our weak Hille-Yosida condition on the linear operator $A$ in \eqref{1.1} (see Assumption \ref{ASS4.4}) combined together with our generalized sub tangential condition (see Assumptions \eqref{ASS2.1} and \eqref{ASS2.4}) we can be applied our result to hyperbolic and parabolic partial differential equations in $L^p$ instead of $L^1$. 

%Several examples of differential equations, such as delay differential equations \cite{Ducrot-Magal-Ruan2013, Liu-Magal-Ruan}, parabolic equation with non-linear and non local boundary conditions \cite{Chu-Ducrot-Magal-Ruan, Ducrot-Magal-Prevost} can be put into the present framework of non densely abstract Cauchy problem \eqref{1.1}. More examples can be found in \cite{Magal-Ruan2018}. Thus, our developed results in this paper will have a wide range of applicability.% {\color{red} Let us mention that we plan to continue this work in order to generalize the results obtained in this paper. More precisely here we consider a non linearity $F$ for system \eqref{1.1} which is defined on the whole state space however it would be interesting to study the existence, the uniqueness, and the properties of the maximal autonomous or non autonomous semiflow  when $F$ is defined only on a sub-domain of the state space.} 

The paper is organized as follows. In sections 2 and 3 we recall some basic results about non densely defined Cauchy problems. In section 4, we investigate the positive invariance of a closed subset.  In section 5, we apply our result to an age-structured equation in $L^p$ spaces which only defined in a closed subset of $L^p$ and show that it generates a semiflow.
\section{Preliminary results}
Let $A: D(A) \subset X \to X$ be a linear operator. In the following we use the following notations
$$
X_0:=\overline{D(A)}
$$
and $A_0:D(A_0) \subset X_0 \to X_0$ the part of $A$ in $X_0$ that is
$$
A_0 x=A x, \quad \forall x\in D(A_0),
$$
and
$$
D(A_0)=\lbrace x\in D(A): Ax\in X_0\rbrace.
$$
\begin{assumption} \label{ASS2.1}
We assume that
\begin{itemize}
\item[{\rm (i)}] There exist two constants $\omega_A\in \mathbb{R}$ and $M_A\geq 1$,
such that $(\omega_A,+\infty)\subset \rho (A)$ and
\begin{equation*}
\left\Vert (\lambda I-A)^{-n}\right\Vert _{\mathcal{L}(X_0)}\leq
M_A \left( \lambda -\omega_A \right) ^{-n},\;\forall \lambda >\omega_A,\;\forall n \geq 1.
\end{equation*}
\item[{\rm(ii)}] $\lim_{\lambda \rightarrow +\infty }(\lambda I-A)^{-1}x=0,\
\forall x\in X$.
\end{itemize}
\end{assumption}
It is important to note that Assumption \ref{ASS2.1} does not say that $A$ is a Hille-Yosida linear operator since the operator norm in Assumption \ref{ASS2.1}-\textrm{(i)} is taken into $X_0 \subseteq  X$ (where the inclusion can be strict) instead of $X$. Further, it follows from \cite{Magal-Ruan2018} that $\rho(A)=\rho(A_0)$. Therefore by Assumption \ref{ASS2.1}, $(A_0,D(A_0))$ is a Hille-Yosida linear operator of type $(\omega_A,M_A)$ and generates a strongly continuous semigroup $\lbrace T_{A_0}(t) \rbrace_{t\geq 0}\subset \mathcal{L}(X_0)$ with
\begin{equation*}
\Vert T_{A_0}(t)\Vert_{\mathcal{L}(X_0)}\leq M_A e^{\omega_A t}, \quad \forall t\geq 0.
\end{equation*}
As a consequence
$$
\lim_{\lambda \to + \infty} \lambda \left(\lambda I -A \right)^{-1}x =x
$$
only for $x \in X_0$. It is important to note that the above limit does not exist in general whenever $x$ belongs to $X$.

We summarize the above discussions as follows.
\begin{lemma}\label{LE2.2} \ Assumption \ref{ASS2.1} is satisfied if and only if
there exist two constants, $M_A\geq 1$ and $\omega_A \in \mathbb{R},$
such that $\left( \omega_A ,+\infty \right) \subset \rho (A)$ and
$A_{0}$ is the infinitesimal generator of a $C_{0}$-semigroup
$\left\{ T_{A_{0}}(t)\right\} _{t\geq 0}$
on $X_{0}$ which satisfies $\left\| T_{A_{0}}(t)\right\| _{\mathcal{L}%
(X_{0})}\leq M_Ae^{\omega_A t},\forall t\geq 0$.
\end{lemma}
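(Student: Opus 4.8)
The plan is to establish both implications of the equivalence, the only genuine tools being the identity $\rho(A)=\rho(A_0)$ from \cite{Magal-Ruan2018} and the observation that the resolvent of $A$ restricts on $X_0$ to the resolvent of $A_0$. Concretely, since $(\lambda I-A)^{-1}(X)\subseteq D(A)\subseteq X_0$, the resolvent always maps $X_0$ into itself; and for $x\in X_0$ the element $y=(\lambda I-A)^{-1}x$ satisfies $Ay=\lambda y-x\in X_0$, so $y\in D(A_0)$ and hence $(\lambda I-A)^{-1}\big|_{X_0}=(\lambda I-A_0)^{-1}$, from which $(\lambda I-A)^{-n}\big|_{X_0}=(\lambda I-A_0)^{-n}$ for all $n\ge 1$. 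The forward implication is essentially contained in the discussion preceding the statement, so the substantive work is the converse.

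For the forward direction I would note that, by the displayed identification of powers, Assumption \ref{ASS2.1}-(i) is \emph{verbatim} the Hille--Yosida resolvent estimate $\|(\lambda I-A_0)^{-n}\|_{\mathcal{L}(X_0)}\le M_A(\lambda-\omega_A)^{-n}$ for $A_0$ on $X_0$. The one ingredient still needed for the classical generation theorem is the density of $D(A_0)$ in $X_0$, and this is exactly where Assumption \ref{ASS2.1}-(ii) enters: for $x\in D(A)$ the identity $\lambda(\lambda I-A)^{-1}x-x=(\lambda I-A)^{-1}Ax$ together with (ii) gives $\lambda(\lambda I-A)^{-1}x\to x$, and a standard $\varepsilon/3$ argument using the uniform bound $\|\lambda(\lambda I-A)^{-1}\|_{\mathcal{L}(X_0)}\le M_A\lambda/(\lambda-\omega_A)$ extends the convergence to every $x\in X_0$. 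Since $\lambda(\lambda I-A)^{-1}x\in D(A_0)$, density follows, and the Hille--Yosida theorem produces the $C_0$-semigroup with the stated exponential bound.

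For the converse I would start from the necessity part of the Hille--Yosida theorem applied to $A_0$: the bound $\|T_{A_0}(t)\|_{\mathcal{L}(X_0)}\le M_Ae^{\omega_A t}$ yields $\|(\lambda I-A_0)^{-n}\|_{\mathcal{L}(X_0)}\le M_A(\lambda-\omega_A)^{-n}$ for $\lambda>\omega_A$, and transferring these through the identification of powers gives Assumption \ref{ASS2.1}-(i) immediately. The main obstacle is Assumption \ref{ASS2.1}-(ii), because the hypotheses only control the resolvent on the possibly proper subspace $X_0$, whereas (ii) is demanded for \emph{every} $x\in X$. I would overcome this with the resolvent identity: fixing some $\mu_0>\omega_A$ and setting $w=(\mu_0 I-A)^{-1}x\in D(A)\subset X_0$, one has
\begin{equation*}
(\lambda I-A)^{-1}x=w+(\mu_0-\lambda)(\lambda I-A_0)^{-1}w .
\end{equation*}
Since $w\in X_0$ and $A_0$ generates a $C_0$-semigroup, $\lambda(\lambda I-A_0)^{-1}w\to w$ and $\mu_0(\lambda I-A_0)^{-1}w\to 0$ as $\lambda\to+\infty$, so $(\mu_0-\lambda)(\lambda I-A_0)^{-1}w\to -w$ and therefore $(\lambda I-A)^{-1}x\to 0$. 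This establishes (ii) for arbitrary $x\in X$ and completes the equivalence.
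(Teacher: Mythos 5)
Your proof is correct, and it is considerably more complete than what the paper offers: the paper states Lemma \ref{LE2.2} merely as a summary of the preceding discussion, deferring the key fact $\rho(A)=\rho(A_0)$ to \cite{Magal-Ruan2018} and effectively only sketching the forward implication. Your forward direction follows the same underlying idea, but you make it self-contained by proving directly that $(\lambda I-A)^{-1}$ restricts on $X_0$ to $(\lambda I-A_0)^{-1}$, and you correctly isolate the one place where Assumption \ref{ASS2.1}-(ii) is genuinely needed, namely the density of $D(A_0)$ in $X_0$ (via $\lambda(\lambda I-A)^{-1}x-x=(\lambda I-A)^{-1}Ax$ on $D(A)$ and an $\varepsilon/3$ extension). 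The real added value is your converse: the paper's discussion never explains why the semigroup bound on $X_0$ forces condition (ii) to hold for \emph{every} $x\in X$, and your resolvent-identity argument
$(\lambda I-A)^{-1}x=w+(\mu_0-\lambda)(\lambda I-A_0)^{-1}w$ with $w=(\mu_0 I-A)^{-1}x\in X_0$
is exactly the right device, since $\lambda(\lambda I-A_0)^{-1}w\to w$ and $\|(\lambda I-A_0)^{-1}\|\to 0$ give $(\lambda I-A)^{-1}x\to w-w=0$. The only cosmetic omission is that you do not remark that $A_0$ is closed (immediate from $\rho(A_0)\neq\emptyset$) before invoking the Hille--Yosida generation theorem; this does not affect the validity of the argument.
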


Next, we consider the non  homogeneous Cauchy problem
\begin{equation}\label{2.1}
v^\prime(t)=\ A v(t)+f(t), \quad t\geq0 \quad \text{and} \quad v(0)=v_0\in X_0,
\end{equation}
with $f\in L^{1}_{\loc}(\mathbb{R},X)$.

The integrated semi-group is one of the major tools to investigate non-homogeneous Cauchy problems. This notion was first introduced by Ardent \cite{Arendt87a, Arendt87b}.  We refer to the books Arendt et al. \cite{Arendt01} whenever $A$ an Hille-Yosida operator. We refer to Magal and Ruan \cite{Magal-Ruan07, Magal-Ruan2018} and Thieme \cite{Thieme08} for an integrated semi-group theory whenever $A$ is not Hille-Yosida operator. We also refer to the book of Magal and Ruan \cite{Magal-Ruan2018} for more references and results on this topic.

\begin{definition} \label{DE2.3} Let Assumption \ref{ASS2.1} be satisfied. Then $\left\lbrace S_A(t) \right\rbrace_{t \geq 0} \in \mathcal{L}(X)$ the \textbf{integrated semigroup generated by} $A$ is a strongly continuous family of bounded linear operators  on $X$, which is defined by
$$
S_A(t)x=(\lambda I-A_0) \int_0^t T_{A_{0}}(l) (\lambda I-A)^{-1}x dl, \forall t \geq 0.
$$
\end{definition}

In order to obtain existence and uniqueness of solutions for \eqref{2.1} whenever $f$ is a continuous map, we will require the following assumption.

\begin{assumption}\label{ASS2.4} Assume that for any $\tau >0$ and $f\in C\left( \left[ 0,\tau %
\right] ,X\right) $ there exists $v_{f}\in C\left( \left[
0,\tau \right] ,X_{0}\right) $ an integrated (mild) solution of
\begin{equation*}
\frac{dv_{f}(t)}{dt}=Av_{f}(t)+f(t),\text{ for }t\geq 0\text{ and }v_{f}(0)=0,
\end{equation*}
that is to say that
$$
\int_0^t v_f(r)dr \in D(A), \ \forall t\geq 0
$$
and
$$
v_f(t)=A\int_0^t v_f(r)dr +\int_0^t f(r)dr , \ \forall t\geq 0.
$$
Moreover we assume that there exists a non decreasing map $\delta
:[0,+\infty) \rightarrow [0,+\infty)$ such that
\begin{equation*}
\Vert v_f(t) \Vert \leq \delta(t) \underset{s\in [0,t]}{\sup} \Vert
f(s)\Vert , \ \forall t\geq 0,
\end{equation*}
with $\delta(t)\rightarrow 0$ as $t\rightarrow 0^+$.
\end{assumption}
\begin{remark} \label{REM2.5}
Note that Assumption \ref{ASS2.4} is equivalent (see \cite{Magal-Ruan09a}) to the assumption that there exists a non-decreasing map $\delta : [0,+\infty)\rightarrow [0,+\infty)$ such that for each $\tau >0$ and each $f\in C\left( \left[ 0,\tau \right] ,X\right)$ the map $t\rightarrow (S_A\ast f)(t)$ is differentiable in $[0,\tau]$ with
$$
\Vert (S_A \diamond f)(t) \Vert\leq \delta(t) \sup_{s\in [0,t]} \Vert f(s) \Vert,\ \forall t\in [0,\tau],
$$
where $(S_A\ast f)(t)$ and $(S_A \diamond f)(t)$ will be defined below in Theorem~\ref{TH2.7} and equation \eqref{2.3}.
\end{remark}
\begin{remark}\label{REM2.6}
It is important to point out the fact Assumption \ref{ASS2.4} is also equivalent to saying that $\left\lbrace S_A(t)\right\rbrace_{t\geq 0}\subset \mathcal{L}(X,X_0)$ is of bounded semi-variation on $[0,t]$ for any $t>0$ that is to say that
$$
V^\infty(S_A,0,t):=\sup\left\lbrace \left\Vert  \sum_{i=0}^{n-1} [S_A(t_{j+1})-S_A(t_{j})]x_j \right\Vert\right\rbrace <+\infty
$$
where the supremum is taken over all partitions $0=t_0<\dots<t_n=t$ of $[0,t]$ and all elements $x_1,\dots,x_n\in X$ with $\Vert x_j \Vert\leq 1$, for $j=1,2,\ldots,n$. 
Moreover the non-decreasing map $\delta : [0,+\infty)\rightarrow [0,+\infty)$ in Assumption \ref{ASS2.4} is defined by
$$
\delta(t):= \sup_{s \in [0,t]} V^\infty(S_A,0,s),\ \forall t\geq 0.
$$
\end{remark}

The following result is proved in \cite[Theorem 2.9]{Magal-Ruan09a}.
\begin{theorem}\label{TH2.7}  Let Assumptions \ref{ASS2.1} and \ref{ASS2.4} be satisfied. Then for each $\tau
>0$ and each $f\in C(\left[ 0,\tau \right] ,X)$ the map
$$
t\rightarrow \left(S_{A}\ast f\right) (t):=\int_{0}^{t}S_{A}(t-s)f(s)ds
$$ 
is continuously differentiable, $\left( S_{A}\ast
f\right) (t)\in D(A),\forall t\in \left[ 0,\tau \right] ,$ and if we set $%
u(t)=\frac{d}{dt}\left( S_{A}\ast f\right) (t),$ then
\begin{equation*}
u(t)=A\int_{0}^{t}u(s)ds+\int_{0}^{t}f(s)ds,\;\forall t\in \left[ 0,\tau %
\right] .
\end{equation*}
Moreover we have
\begin{equation*}
\left\| u(t)\right\| \leq \delta (t)\sup_{s\in \left[ 0,t\right] }\left\|
f(s)\right\| ,\;\forall t\in \left[ 0,\tau \right] .
\end{equation*}
Furthermore, for each $\lambda \in \left( \omega ,+\infty \right)$ we have
for each $t\in \left[ 0,\tau \right]$ that
\begin{equation} \label{2.2}
\left( \lambda I-A\right) ^{-1}\frac{d}{dt}\left( S_{A}\ast f\right)
(t)=\int_{0}^{t}T_{A_{0}}(t-s)\left( \lambda I-A\right) ^{-1}f(s)ds.
\end{equation}
\end{theorem}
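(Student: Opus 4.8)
The plan is to identify the derivative $u(t)=\frac{d}{dt}(S_A\ast f)(t)$ with the mild solution $v_f$ furnished by Assumption~\ref{ASS2.4}, and then to read off all the conclusions from that identification. The bridge between the convolution $S_A\ast f$ and $v_f$ is the resolvent $R_\lambda:=(\lambda I-A)^{-1}$, which is bounded and injective, so it suffices to prove the desired identities after applying $R_\lambda$ and then cancel it at the end.

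First I would record the elementary resolvent representation of the integrated semigroup: applying $R_\lambda$ to Definition~\ref{DE2.3} and using that $(\lambda I-A_0)^{-1}$ coincides with $R_\lambda$ gives $R_\lambda S_A(t)x=\int_0^t T_{A_0}(l)R_\lambda x\,dl$ for every $x\in X$. Convolving with $f$ and exchanging the order of integration (Fubini is legitimate since $s\mapsto R_\lambda f(s)$ is a continuous $X_0$-valued map), a change of variables $\sigma=l+s$ turns $R_\lambda(S_A\ast f)(t)$ into $\int_0^t (T_{A_0}\ast g_\lambda)(\sigma)\,d\sigma$, where $g_\lambda:=R_\lambda f\in C([0,\tau],X_0)$ and $T_{A_0}\ast g_\lambda$ is the usual variation-of-constants convolution. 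Since the integrand is continuous, this already shows that $R_\lambda(S_A\ast f)$ is $C^1$ with derivative $(T_{A_0}\ast g_\lambda)(t)=\int_0^t T_{A_0}(t-s)R_\lambda f(s)\,ds$.

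The technical heart, which I expect to be the main obstacle, is to show $R_\lambda v_f(t)=(T_{A_0}\ast g_\lambda)(t)$, i.e.\ that the mild solution collapses under $R_\lambda$ to the classical variation-of-constants formula on $X_0$. For this I set $h:=R_\lambda v_f$ and $W(t):=\int_0^t v_f(s)\,ds$, note $W(t)\in D(A)$ and $v_f(t)=AW(t)+\int_0^t f(s)\,ds$ from Assumption~\ref{ASS2.4}, and pull the bounded operator $R_\lambda$ through the integral to get $R_\lambda W(t)=\int_0^t h(s)\,ds$. Feeding the mild equation into $R_\lambda$ and differentiating, together with the identity $R_\lambda A y=\lambda R_\lambda y-y$, should yield $h'(t)=A_0 h(t)+R_\lambda f(t)$ with $h(0)=0$; the delicate points are verifying that $h$ is genuinely $C^1$, that $h(t)\in D(A_0)$ (which holds since $Ah(t)=\lambda h(t)-v_f(t)\in X_0$), and hence that $h$ is a classical solution. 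Uniqueness of the classical solution of this inhomogeneous problem on $X_0$ then forces $h(t)=(T_{A_0}\ast g_\lambda)(t)$.

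Finally I would assemble the pieces. Combining the second and third steps gives $R_\lambda(S_A\ast f)(t)=\int_0^t R_\lambda v_f(\sigma)\,d\sigma=R_\lambda\int_0^t v_f(\sigma)\,d\sigma$, so injectivity of $R_\lambda$ yields $(S_A\ast f)(t)=\int_0^t v_f(\sigma)\,d\sigma$. Continuity of $v_f$ then makes $S_A\ast f$ continuously differentiable with $u=v_f$; the membership $(S_A\ast f)(t)=W(t)\in D(A)$ and the integral equation $u(t)=A\int_0^t u(s)\,ds+\int_0^t f(s)\,ds$ are exactly the content of Assumption~\ref{ASS2.4}; the estimate $\|u(t)\|\le\delta(t)\sup_{s\in[0,t]}\|f(s)\|$ is inherited verbatim; and \eqref{2.2} is precisely the identity $R_\lambda u(t)=R_\lambda v_f(t)=(T_{A_0}\ast g_\lambda)(t)$ established in the third step.
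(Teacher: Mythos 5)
Your proof is correct. Note that the paper gives no proof of this theorem at all --- it is quoted from \cite[Theorem 2.9]{Magal-Ruan09a} --- and your route (reduce everything to the mild solution $v_f$ of Assumption \ref{ASS2.4} by applying the injective resolvent $R_\lambda=(\lambda I-A)^{-1}$, identify $R_\lambda v_f$ with the variation-of-constants convolution $\int_0^t T_{A_0}(t-s)R_\lambda f(s)\,ds$, and conclude $(S_A\ast f)(t)=\int_0^t v_f(s)\,ds$ by cancelling $R_\lambda$) is essentially the standard argument of that reference. The two delicate points you single out do close exactly as you indicate: $h=R_\lambda v_f$ satisfies $h(t)=\lambda\int_0^t h(s)\,ds-\int_0^t v_f(s)\,ds+\int_0^t R_\lambda f(s)\,ds$, whose right-hand side is $C^1$ since $v_f$ and $R_\lambda f$ are continuous, and $Ah(t)=\lambda h(t)-v_f(t)$ lies in $X_0$, so $h(t)\in D(A_0)$ and uniqueness of classical solutions for the generator $A_0$ completes the identification.
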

From now on we will use the following notation
\begin{equation} \label{2.3}
\left( S_{A} \diamond f\right) (t):=\frac{d}{dt}\left( S_{A}\ast f\right) (t).
\end{equation}
From \eqref{2.2} and using the fact that $\left( S_{A} \diamond f\right) (t) \in X_0 $, we deduce the approximation formula
\begin{equation} \label{2.4}
\left( S_{A} \diamond f\right) (t)= \lim_{\lambda \to + \infty}\int_{0}^{t}T_{A_{0}}(t-s)\lambda \left( \lambda I-A\right) ^{-1}f(s)ds.
\end{equation}
A consequence of the approximation formula is the following
\begin{equation} \label{2.5}
\left( S_{A} \diamond f\right) (t+s)=T_{A_0}(s)\left( S_{A} \diamond f\right) (t)+ \left( S_{A} \diamond f(t+.)\right) (s), \forall t, s \geq 0.
\end{equation}
The following result is proved by Magal and Ruan \cite[Theorem 3.1]{Magal-Ruan07}, which will be constantly used and applied to the operator $A-\gamma B$ in sections 4 and 5.
\begin{theorem}[Bounded Linear Perturbation]~\\
\label{TH2.8}
Let Assumptions \ref{ASS2.1} and \ref{ASS2.4} be satisfied. Assume $L\in \mathcal{%
L}\left( X_{0},X\right) $ is a bounded linear operator.\ Then $%
A+L:D(A)\subset X\rightarrow X$ satisfies the conditions in Assumptions \ref{ASS2.1} and \ref{ASS2.4}. More precisely, if we fix $\tau _{L}>0$ such that
\begin{equation*}
\delta \left( \tau _{L}\right) \left\| L\right\| _{\mathcal{L}\left(
X_{0},X\right) }<1,
\end{equation*}
and if we denote by $\left\{ S_{A+L}(t)\right\} _{t\geq 0}$ the integrated
semigroup generated by $A+L,$ then for any $f\in C\left( \left[ 0,\tau _{L}%
\right] ,X\right)$, we have
\begin{equation*}
\left\| \frac{d}{dt}\left( S_{A+L}\ast f\right) \right\| \leq \frac{\delta
\left( t\right) }{1-\delta \left( \tau _{L}\right) \left\| L\right\| _{%
\mathcal{L}\left( X_{0},X\right) }}\sup_{s\in \left[ 0,t\right] }\left\|
f(s)\right\| ,\;\forall t\in \left[ 0,\tau _{L}\right] .
\end{equation*}
\end{theorem}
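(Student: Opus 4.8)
The plan is to avoid resolvent estimates altogether and to work directly in the time domain, turning the perturbed problem into a Volterra fixed-point equation whose contraction constant is governed by $\delta(\tau_L)\|L\|_{\mathcal L(X_0,X)}$. Since $L\in\mathcal L(X_0,X)$ and $D(A)\subset X_0$ we have $D(A+L)=D(A)$, hence $\overline{D(A+L)}=X_0$. Rewriting the inhomogeneous problem $v'=(A+L)v+f$, $v(0)=0$ as $v'=Av+(Lv+f)$ and applying Theorem~\ref{TH2.7} to $A$ indicates that its integrated mild solution should be the fixed point of
\[
u=S_A\diamond(f+Lu),\qquad u\in C([0,\tau_L],X_0).
\]
The right-hand side is well defined because $u(s)\in X_0$ makes $Lu(s)\in X$ meaningful, and because $\left(S_A\diamond g\right)(t)\in X_0$ keeps $\Phi(u):=S_A\diamond(f+Lu)$ valued in $C([0,\tau_L],X_0)$.

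First I would run the Banach fixed-point theorem for $\Phi$ in the supremum norm. Theorem~\ref{TH2.7} gives, for $u_1,u_2\in C([0,\tau_L],X_0)$,
\[
\left\|\Phi(u_1)(t)-\Phi(u_2)(t)\right\|\le \delta(t)\,\|L\|_{\mathcal L(X_0,X)}\sup_{s\in[0,t]}\|u_1(s)-u_2(s)\|,
\]
so the hypothesis $\delta(\tau_L)\|L\|_{\mathcal L(X_0,X)}<1$ together with the monotonicity of $\delta$ makes $\Phi$ a contraction on $[0,\tau_L]$, producing a unique fixed point $u$. For the estimate, the fixed-point identity and Theorem~\ref{TH2.7} yield $\|u(t)\|\le\delta(t)\sup_{s\in[0,t]}\|f(s)\|+\delta(t)\|L\|_{\mathcal L(X_0,X)}\sup_{s\in[0,t]}\|u(s)\|$; taking the supremum over $[0,t]$, moving the $u$-term to the left, and using $\delta(t)\le\delta(\tau_L)$ in the resulting denominator gives exactly the asserted bound.

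Next I would verify that $A+L$ itself satisfies Assumptions~\ref{ASS2.1} and~\ref{ASS2.4}. For Assumption~\ref{ASS2.1} I would construct the $C_0$-semigroup generated by the part $(A+L)_0$ of $A+L$ in $X_0$ by solving the affine fixed-point equation $V(t)x=T_{A_0}(t)x+\left(S_A\diamond LV(\cdot)x\right)(t)$ on $[0,\tau_L]$ for each $x\in X_0$, propagating it by the relation~\eqref{2.5}, and deducing an exponential bound $\|V(t)\|_{\mathcal L(X_0)}\le\widetilde M e^{\widetilde\omega t}$ via a Gronwall argument; Lemma~\ref{LE2.2} together with $\rho(A+L)=\rho((A+L)_0)$ then yields Assumption~\ref{ASS2.1}, so that $\{S_{A+L}(t)\}_{t\ge0}$ is defined by Definition~\ref{DE2.3}. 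Finally, since $\frac{d}{dt}(S_{A+L}\ast f)$ is an integrated mild solution of $v'=(A+L)v+f$, $v(0)=0$, rewriting this equation as one for $A$ with inhomogeneity $Lv+f$ and invoking uniqueness for the Volterra equation identifies it with $u$; the estimate of the previous paragraph then simultaneously verifies Assumption~\ref{ASS2.4} for $A+L$ through Remark~\ref{REM2.5} and gives the inequality in the statement.

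The step I expect to be the genuine obstacle is not the contraction but the justification that one may bypass the resolvent entirely. The classical bounded-perturbation theorem for Hille-Yosida generators would factor $\lambda I-(A+L)=(\lambda I-A)\left(I-(\lambda I-A)^{-1}L\right)$ and conclude by a Neumann series; here this fails because $L$ maps into $X$, so the relevant quantity is $\|(\lambda I-A)^{-1}\|_{\mathcal L(X,X_0)}$, which Assumption~\ref{ASS2.1}(i) does not control (its bound lives only in $\mathcal L(X_0)$) and which need not tend to $0$ as $\lambda\to+\infty$ outside the Hille-Yosida class. The Volterra formulation is designed precisely to replace resolvent decay in $\lambda$ by the smallness $\delta(t)\to0$ as $t\to0^+$, equivalently the bounded semi-variation of $S_A$ recorded in Remark~\ref{REM2.6}; the care needed is to keep every manipulation valid for merely $X$-valued continuous data $f$, which is the technical heart of the argument.
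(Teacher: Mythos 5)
Your Volterra fixed-point argument for the estimate itself is sound and is in fact the route taken in the cited source \cite[Theorem 3.1]{Magal-Ruan07}: the contraction bound for $\Phi(u)=S_A\diamond(f+Lu)$ via Theorem~\ref{TH2.7}, the absorption of the $\delta(t)\left\| L\right\|\sup\left\| u\right\|$ term, and the identification of the fixed point with $\frac{d}{dt}(S_{A+L}\ast f)$ are all correct as written.

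The genuine gap is in your third paragraph, and it is twofold. First, to assert that $A+L$ satisfies Assumption~\ref{ASS2.1} you must in particular produce some $\omega'$ with $\left(\omega',+\infty\right)\subset\rho(A+L)$; this is also a prerequisite for Definition~\ref{DE2.3} to make sense for $A+L$ (so that $S_{A+L}$ exists at all) and for identifying the generator of your constructed semigroup $V(t)$ with $(A+L)_0$. Your appeal to Lemma~\ref{LE2.2} together with $\rho(A+L)=\rho((A+L)_0)$ does not supply this: the ``if'' direction of Lemma~\ref{LE2.2} already assumes the resolvent set inclusion, and a Gronwall bound on a semigroup candidate does not by itself locate its generator or its resolvent set. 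Second, your stated reason for declaring the resolvent route impassable is incorrect under the standing hypotheses. It is true that Assumption~\ref{ASS2.1}(i) alone does not control $\left\|(\lambda I-A)^{-1}\right\|_{\mathcal{L}(X,X_0)}$, but Theorem~\ref{TH2.8} assumes Assumptions~\ref{ASS2.1} \emph{and}~\ref{ASS2.4}, and under both the paper records (in the unnumbered lemma quoted from \cite[Lemma~2.13]{Magal-Ruan09a}, stated immediately after Theorem~\ref{TH2.8}) that $\lim_{\lambda\to+\infty}\left\|(\lambda I-A)^{-1}\right\|_{\mathcal{L}(X)}=0$; this follows from the Laplace representation of the resolvent and the bound $\left\| S_A(t)x\right\|\leq\delta(t)\left\| x\right\|$ supplied by Assumption~\ref{ASS2.4}. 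Consequently $\left\| L(\lambda I-A)^{-1}\right\|_{\mathcal{L}(X)}<1$ for $\lambda$ large, the factorization $(\lambda I-A-L)^{-1}=(\lambda I-A)^{-1}\left[I-L(\lambda I-A)^{-1}\right]^{-1}$ converges by Neumann series, and the resolvent set inclusion follows. You need this step; the Volterra formulation replaces the resolvent only in the derivation of the quantitative estimate, not in the verification that $A+L$ again satisfies Assumption~\ref{ASS2.1}.
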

The following result is proved in \cite[Lemma 2.13]{Magal-Ruan09a}.
\begin{lemma}  Let Assumptions \ref{ASS2.1} and \ref{ASS2.4} be satisfied. Then
$$
\lim_{\lambda \to + \infty} \Vert \left( \lambda I -A \right)^{-1} \Vert_{\mathcal{L}(X)}=0.
$$
\end{lemma}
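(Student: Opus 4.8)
The plan is to represent the resolvent on all of $X$ through the integrated semigroup and then exploit the smallness of $S_A$ near $t=0$ furnished by Assumption \ref{ASS2.4}. First I would establish, for a fixed $\lambda_0>\omega_A$ and for all $\lambda$ large enough, the Laplace-transform formula
$$
(\lambda I-A)^{-1}x=\lambda\int_0^{\infty}e^{-\lambda t}S_A(t)x\,dt,\qquad \forall x\in X .
$$
This can be obtained either from the resolvent identity together with the fact that $(\lambda I-A)^{-1}$ coincides with $(\lambda I-A_0)^{-1}$ on $X_0$, so that $(\lambda I-A)^{-1}x=(\lambda_0 I-A_0)(\lambda I-A_0)^{-1}(\lambda_0 I-A)^{-1}x$, or directly by inserting Definition \ref{DE2.3} and applying Fubini's theorem; in either case the integral converges for large $\lambda$ because $S_A(t)$ grows at most exponentially. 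The point here is precisely the non-Hille--Yosida difficulty: $\lambda(\lambda I-A)^{-1}x$ need not converge to $x$ for $x\notin X_0$, so one cannot argue by the usual density estimates and must keep track of $S_A$ itself.

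Next I would record the pointwise bound $\Vert S_A(t)\Vert_{\mathcal{L}(X,X_0)}\le \delta(t)$, which follows immediately from Remark \ref{REM2.6} by taking the trivial one-interval partition of $[0,t]$ together with $S_A(0)=0$. Splitting the integral at a fixed $h>0$, the near-diagonal part is then controlled using monotonicity of $\delta$:
$$
\Big\Vert \lambda\int_0^{h}e^{-\lambda t}S_A(t)x\,dt\Big\Vert \le \delta(h)\,\Vert x\Vert\,\lambda\int_0^{h}e^{-\lambda t}\,dt\le \delta(h)\,\Vert x\Vert .
$$

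For the tail I would use the functional equation $S_A(t+s)=S_A(s)+T_{A_0}(s)S_A(t)$, obtained from Definition \ref{DE2.3} and the commutation of $\lambda_0 I-A_0$ with $T_{A_0}(s)$ on $D(A_0)$. Substituting $t=h+r$ and using $\int_0^\infty e^{-\lambda r}T_{A_0}(r)w\,dr=(\lambda I-A_0)^{-1}w$ for $w\in X_0$ gives
$$
\lambda\int_h^{\infty}e^{-\lambda t}S_A(t)x\,dt=e^{-\lambda h}\Big[(\lambda I-A)^{-1}x+\lambda(\lambda I-A_0)^{-1}S_A(h)x\Big].
$$
The slick feature is that the tail reproduces $(\lambda I-A)^{-1}x$ itself; moving it to the left yields
$$
(1-e^{-\lambda h})(\lambda I-A)^{-1}x=\lambda\int_0^{h}e^{-\lambda t}S_A(t)x\,dt+e^{-\lambda h}\,\lambda(\lambda I-A_0)^{-1}S_A(h)x .
$$
Estimating both terms with $\Vert S_A(h)\Vert\le\delta(h)$ and $\Vert(\lambda I-A_0)^{-1}\Vert_{\mathcal{L}(X_0)}\le M_A/(\lambda-\omega_A)$ from Assumption \ref{ASS2.1}-(i) gives
$$
(1-e^{-\lambda h})\,\Vert(\lambda I-A)^{-1}\Vert_{\mathcal{L}(X)}\le \delta(h)+e^{-\lambda h}\,\frac{\lambda M_A}{\lambda-\omega_A}\,\delta(h).
$$

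Finally I would let $\lambda\to+\infty$ with $h$ fixed: since $e^{-\lambda h}\to 0$ and $\lambda/(\lambda-\omega_A)\to 1$, this yields $\limsup_{\lambda\to+\infty}\Vert(\lambda I-A)^{-1}\Vert_{\mathcal{L}(X)}\le \delta(h)$, and letting $h\to 0^+$ with $\delta(h)\to 0$ finishes the proof. I expect the only genuine obstacle to be the first step, namely pinning down the Laplace representation and the functional equation for $S_A$ in the non-densely-defined setting, where the usual approximation identities are available only on $X_0$; once these are in hand the remaining estimates are routine.
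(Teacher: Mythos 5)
Your argument is correct. Note that the paper itself contains no proof of this lemma --- it is quoted from \cite[Lemma 2.13]{Magal-Ruan09a} --- so there is nothing in-text to compare against; what you have written is a valid self-contained reconstruction along the standard lines of that reference: the Laplace representation $(\lambda I-A)^{-1}x=\lambda\int_0^{\infty}e^{-\lambda t}S_A(t)x\,dt$, the pointwise bound $\Vert S_A(t)\Vert_{\mathcal{L}(X,X_0)}\leq\delta(t)$ coming from the bounded semi-variation (Remark \ref{REM2.6}, or equivalently Remark \ref{REM2.5} applied to the constant function $f\equiv x$), and the splitting of the integral at a small $h$ with the functional equation $S_A(h+r)=S_A(r)+T_{A_0}(r)S_A(h)$ used to fold the tail back into $(\lambda I-A)^{-1}x$. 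The identities you derive check out (in particular the tail computation and the resulting inequality $(1-e^{-\lambda h})\Vert(\lambda I-A)^{-1}\Vert_{\mathcal{L}(X)}\leq\delta(h)+e^{-\lambda h}\lambda M_A(\lambda-\omega_A)^{-1}\delta(h)$), and the two steps you leave as routine --- the exponential growth bound on $S_A(t)$ guaranteeing convergence of the Laplace integral, and pulling the closed operator $\lambda_0 I-A_0$ through the integral when verifying the Laplace formula from Definition \ref{DE2.3} --- are indeed routine and correctly identified as the only points needing care in the non-densely-defined setting.
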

It follows that if $B \in \mathcal{L}(X_0,X)$, then for all $\lambda>0$ large enough the linear operator $\lambda I -A -B$ is invertible and its inverse can be written as follows
$$
\left(\lambda I -A -B \right)^{-1}=\left(\lambda I -A \right)^{-1}\left[ I-B\left(\lambda I -A \right)^{-1}\right]^{-1}.
$$
\section{Existence and Uniqueness of a Maximal Semiflow}
Consider now the non-autonomous semi-linear Cauchy problem
\begin{equation}\label{3.1}
U(t,s)x=x+A\int_{s}^{t}U(l,s)xdl+\int_{s}^{t}F(l,U(l,s)x)dl,\;\;\text{{}}%
t\geq s\geq 0,
\end{equation}
and the following problem
\begin{equation}
U(t,s)x=T_{A_{0}}(t-s)x+\frac{d}{dt}(S_{A}\ast F(.+s,U(.+s,s)x)(t-s),\text{ }%
t\geq s\geq 0.   \label{3.2}
\end{equation}%
We will make the following assumption.

\begin{assumption}  \label{ASS3.1} Assume that $F:\left[ 0,+\infty \right) \times \overline{D(A)}\rightarrow X$
is a continuous map such that for each $\tau _{0}>0$ and each $\xi >0,$
there exists $K(\tau _{0},\xi )>0$ such that
\begin{equation*}
\left\| F(t,x)-F(t,y)\right\| \leq K(\tau _{0},\xi )\left\| x-y\right\|
\end{equation*}
whenever $t\in \left[ 0,\tau _{0}\right] ,$\textit{\ }$y,x\in X_{0},$ and $%
\left\| x\right\| \leq \xi ,\left\| y\right\| \leq \xi .$
\end{assumption}
In the following definition $\tau $ is the blow-up time of maximal solutions
of (\ref{3.1}).
\begin{definition}[Non autonomous maximal semiflow]
\label{DE3.2} ~\\ Consider two maps $\tau :\left[ 0,+\infty \right)
\times X_{0}\rightarrow \left( 0,+\infty \right] $ and $U:D_{\tau
}\rightarrow X_{0},$ where
$$
D_{\tau }=\left\{ (t,s,x)\in \left[ 0,+\infty
\right) ^{2}\times X_{0}:s\leq t<s+\tau \left( s,x\right) \right\}.
$$
We say that $U$ is \textbf{ a maximal non-autonomous semiflow on} $X_{0}$ if $U$
satisfies the following properties
\begin{itemize}
\item[{\rm (i)}] $\tau \left( r,U(r,s)x\right) +r=\tau \left( s,x\right)
+s,\forall s\geq 0,\forall x\in X_{0},\forall r\in \left[ s,s+\tau \left(
s,x\right) \right)$.

\item[{\rm (ii)}] $U(s,s)x=x,\forall s\geq 0,\forall x\in X_{0}$.

\item[{\rm (iii)}]$U(t,r)U(r,s)x=U(t,s)x,\forall s\geq 0,\forall x\in
X_{0},\forall t,r\in \left[ s,s+\tau \left( s,x\right) \right) $ with $t\geq
r.$

\item[{\rm (iv)}] If $\tau \left( s,x\right) <+\infty ,$ then
\begin{equation*}
\lim_{t\rightarrow \left( s+\tau \left( s,x\right) \right) ^{-}}\left\|
U(t,s)x\right\| =+\infty .
\end{equation*}
\end{itemize}
\end{definition}

Set
\begin{equation*}
D=\left\{ \left( t,s,x\right) \in \left[ 0,+\infty \right) ^{2}\times
X_{0}:t\geq s\right\} .
\end{equation*}
The following theorem is the main result in this section, which was proved
in \cite[Theorem 5.2]{Magal-Ruan07}.
\begin{theorem} \label{TH3.3}
Let Assumptions \ref{ASS2.1}, \ref{ASS2.4} and \ref{ASS3.1} be satisfied. Then there exists a map $\tau
:\left[ 0,+\infty \right) \times X_{0}\rightarrow \left( 0,+\infty \right] $
and a maximal non-autonomous semiflow $U:D_{\tau }\rightarrow X_{0},$ such
that for each $x\in X_{0}$ and each $s\geq 0,$ $U(.,s)x\in C\left( \left[
s,s+\tau \left( s,x\right) \right) ,X_{0}\right) $ is a unique maximal
solution of (\ref{3.1}) (or equivalently a unique maximal solution of (\ref{3.2})). Moreover, $D_{\tau }$ is open in $D$ and the map $\left(
t,s,x\right) \rightarrow U(t,s)x$ is continuous from $D_{\tau }$ into $%
X_{0}. $
\end{theorem}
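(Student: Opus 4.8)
The plan is to recast \eqref{3.2} as a fixed-point equation and to exploit the quantitative estimate of Theorem~\ref{TH2.7}, whose decaying modulus $\delta(t)\to 0$ as $t\to 0^+$ plays the role that the Hille--Yosida bound plays in the classical densely defined theory. Fix $s\geq 0$ and $x\in X_0$, and for $\tau>0$ introduce the shifted unknown $v(t)=U(t+s,s)x$, so that, using the notation of \eqref{2.3}, equation \eqref{3.2} reads
$$
v(t)=T_{A_0}(t)x+\left(S_A\diamond F(\cdot+s,v(\cdot))\right)(t),\qquad t\in[0,\tau].
$$
I would define $\Phi$ on $C([0,\tau],X_0)$ by letting $(\Phi v)(t)$ be the right-hand side. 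By Theorem~\ref{TH2.7} the convolution term is continuously differentiable, lies in $X_0$, and obeys $\|(S_A\diamond g)(t)\|\leq \delta(t)\sup_{[0,t]}\|g\|$; combined with the continuity of $t\mapsto T_{A_0}(t)x$ this shows $\Phi$ maps $C([0,\tau],X_0)$ into itself.

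First I would establish local existence and uniqueness by the Banach fixed-point theorem on a closed ball of radius $\xi$ in $C([0,\tau],X_0)$ large enough to contain the free term $T_{A_0}(\cdot)x$ on $[0,\tau]$. Taking $\tau_0=s+1$ and using Assumption~\ref{ASS3.1}, for $v_1,v_2$ in this ball one has
$$
\sup_{[0,\tau]}\|(\Phi v_1)(t)-(\Phi v_2)(t)\|\leq \delta(\tau)\,K(\tau_0,\xi)\sup_{[0,\tau]}\|v_1(t)-v_2(t)\|,
$$
and a similar bound controls $\|(\Phi v)(t)-T_{A_0}(t)x\|$. Since $\delta(\tau)\to 0$ as $\tau\to 0^+$, I can choose $\tau$ so small that $\delta(\tau)K(\tau_0,\xi)<1$ and that $\Phi$ maps the ball into itself; this yields a unique fixed point $v\in C([0,\tau],X_0)$, hence a unique local solution of \eqref{3.2}, equivalently of \eqref{3.1} by Theorem~\ref{TH2.7}. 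Crucially the admissible length $\tau$ depends only on $\tau_0$ and the a priori bound $\xi$, not on the particular datum $x$.

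Next I would pass from local to maximal solutions. Defining $\tau(s,x)$ as the supremum of lengths of intervals on which a (necessarily unique) solution exists, the composition identity \eqref{2.5} for the integrated semigroup together with local uniqueness gives a consistent concatenation, so $U(t,s)x$ is well defined on $[s,s+\tau(s,x))$. The blow-up alternative (iv) follows from the uniformity of the local existence time: if $\tau(s,x)<\infty$ while $U(\cdot,s)x$ stayed bounded on $[s,s+\tau(s,x))$, the solution could be restarted from a time close to the endpoint and extended by a fixed amount depending only on that bound, contradicting maximality. The identities (ii)--(iii) are then immediate from uniqueness applied at the intermediate time $r$, since both $U(t,r)U(r,s)x$ and $U(t,s)x$ solve the same shifted equation issuing from $U(r,s)x$, and (i) is the corresponding bookkeeping for the blow-up times. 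Finally, continuous dependence on $(t,s,x)$ and the openness of $D_\tau$ in $D$ follow from a Gronwall-type argument: subtracting the fixed-point equations for two data and applying $\|(S_A\diamond g)(t)\|\leq \delta(t)\sup\|g\|$ with the Lipschitz bound yields $\sup_{[0,T]}\|v_1-v_2\|\leq C\,\|x_1-x_2\|$ on each compact subinterval, which also forces the existence time to be lower semicontinuous.

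The main obstacle is not the fixed-point scheme itself but the fact that $A$ only satisfies the weak Hille--Yosida condition of Assumption~\ref{ASS2.1}: because $\lambda(\lambda I-A)^{-1}x$ need not converge for $x\in X\setminus X_0$, one cannot manipulate \eqref{3.2} through the resolvent in the usual way, and every estimate must be routed through the bounded-semivariation property of Assumption~\ref{ASS2.4} as encapsulated in Theorem~\ref{TH2.7}. Once that estimate is available, the non-density of $D(A)$ creates no further difficulty, since $S_A\diamond$ sends continuous $X$-valued data into $C([0,\tau],X_0)$ while $T_{A_0}$ already acts within $X_0$.
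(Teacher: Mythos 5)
The paper does not actually prove Theorem~\ref{TH3.3}: it quotes it from Magal and Ruan \cite[Theorem 5.2]{Magal-Ruan07}, and your scheme --- a contraction on a ball of $C([0,\tau],X_0)$ built from the estimate $\Vert (S_A\diamond g)(t)\Vert\leq\delta(t)\sup_{s\in[0,t]}\Vert g(s)\Vert$ of Theorem~\ref{TH2.7} with $\delta(\tau)\to 0$, an existence time controlled only by the norm bound of the datum, concatenation via \eqref{2.5}, a restart argument for the blow-up alternative, and a Gronwall-type bound for continuous dependence --- is precisely the route taken in that reference. The outline is sound; the two points that need the standard extra care when written out are the converse half of the equivalence between \eqref{3.1} and \eqref{3.2} (i.e.\ uniqueness for the linear inhomogeneous problem, so that every integrated solution satisfies the variation-of-constants formula), and upgrading ``$U(\cdot,s)x$ unbounded'' to the full limit $+\infty$ in Definition~\ref{DE3.2}~(iv), which follows by applying the same restart argument along any sequence $t_n\uparrow s+\tau(s,x)$ on which the solution stays bounded.
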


\section{Positive invariance of a closed subset}
In this section we will study the positive invariance  of a closed subset by imposing the so called sub-tangential condition. Our results extend those in \cite{Pavel-Motreanu,Thieme90a} since we focus on the study of non densely defined non Hille-Yosida semilinear Cauchy problems.  We start with some lemmas that will be useful in the subsequent discussions.
\begin{lemma}\label{LE4.1}
Let Assumptions \ref{ASS2.1} and \ref{ASS2.4} be satisfied. Let $0\leq a<b$ and $x \in X$  be given and define
$$
f(t):=x \mathbbm{1}_{[a,b]}(t),\ \forall t\geq 0.
$$
Then $t \rightarrow (S_A \ast f)(t)$ is differentiable in $[0,+\infty)$ and
$$
(S_A \diamond f)(t)=\dfrac{d}{dt}(S_A \ast f)(t)=S_A((t-a)^+)x-S_A((t-b)^+)x,\ \forall t\geq 0,
$$
where $\sigma^+:=\max(0,\sigma), \forall \sigma \in \mathbb{R}$.
\end{lemma}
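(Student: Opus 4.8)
The plan is to compute the convolution $(S_A\ast f)(t)$ explicitly and then differentiate it by hand, rather than appealing to Theorem~\ref{TH2.7}, which requires $f$ to be continuous whereas here $f$ is a step function. First I would substitute the definition of $f$ into $(S_A\ast f)(t)=\int_0^t S_A(t-s)f(s)\,ds$ and restrict the domain of integration. Since $a\geq 0$, for $s\in[0,t]$ the integrand $S_A(t-s)x$ is supported on $[0,t]\cap[a,b]=[a,\min(t,b)]$, which is nonempty exactly when $t\geq a$. After the change of variables $r=t-s$ this yields the single formula
$$
(S_A\ast f)(t)=\int_{(t-b)^+}^{(t-a)^+}S_A(r)x\,dr,\qquad \forall t\geq 0,
$$
valid simultaneously in the three regimes $t\leq a$, $a<t\leq b$, and $t>b$ (in the first both limits equal $0$, so the integral vanishes). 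This step is routine once the integration limits are tracked correctly.

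Next I would differentiate. By Assumption~\ref{ASS2.1} the family $\{S_A(t)\}_{t\geq 0}$ is strongly continuous, so $r\mapsto S_A(r)x$ is continuous and its primitive $\Phi(\sigma):=\int_0^\sigma S_A(r)x\,dr$ is continuously differentiable on $[0,+\infty)$ with $\Phi'(\sigma)=S_A(\sigma)x$. Writing $(S_A\ast f)(t)=\Phi((t-a)^+)-\Phi((t-b)^+)$, the derivative then follows from the chain rule on each open interval on which $(t-a)^+$ and $(t-b)^+$ are smooth.

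The main point to handle carefully is differentiability at the two kink points $t=a$ and $t=b$, where the maps $t\mapsto(t-a)^+$ and $t\mapsto(t-b)^+$ fail to be classically differentiable. Here I would invoke the crucial fact that $S_A(0)=0$, which is immediate from Definition~\ref{DE2.3} since the integral over $[0,0]$ vanishes; hence $\Phi'(0)=S_A(0)x=0$. Consequently, for any $c\geq 0$, the composition $t\mapsto\Phi((t-c)^+)$ has left derivative $0$ (it is constant for $t<c$) and right derivative $\lim_{\sigma\to 0^+}\Phi(\sigma)/\sigma=\Phi'(0)=0$ at $t=c$, so it is differentiable there, with derivative equal to $S_A((t-c)^+)x$ under the convention $S_A(0)x=0$. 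Applying this with $c=a$ and $c=b$ yields differentiability on all of $[0,+\infty)$ and the asserted identity
$$
(S_A\diamond f)(t)=\frac{d}{dt}(S_A\ast f)(t)=S_A((t-a)^+)x-S_A((t-b)^+)x.
$$
Continuity of the right-hand side, again from strong continuity of $S_A$, further shows that $S_A\ast f$ is in fact $C^1$. I expect this kink analysis to be the only genuine obstacle; everything else is bookkeeping, and the entire argument rests on Assumption~\ref{ASS2.1} together with $S_A(0)=0$.
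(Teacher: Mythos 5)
Your proof is correct and follows essentially the same route as the paper: compute $(S_A\ast f)(t)$ explicitly by restricting the support of $f$, change variables to $r=t-s$, and differentiate the resulting primitive. The paper simply states that ``the formula follows by computing the time derivative,'' so your explicit treatment of the kink points at $t=a$ and $t=b$ via $S_A(0)=0$ supplies a detail the paper leaves implicit, but it is the same argument.
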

\begin{proof}
We observe that
$$
(S_A \ast f)(t)=\left\lbrace
\begin{array}{lll}
 \int_{a}^t S_A(t-s)x ds & \text{ if } & t\in [a,b],\\
\int_{a}^b S_A(t-s)x ds & \text{ if } & t\geq b, \\
0 & \text{ if } & 0\leq  t\leq a,
\end{array}
\right.
$$
which is equivalent to
$$
(S_A \ast f)(t)=\left\lbrace
\begin{array}{lll}
\int_{0}^{t-a} S_A(s)x ds & \text{ if } & t\in [a,b],\\
\int_{t-b}^{t-a} S_A(s)x ds & \text{ if } & t\geq b, \\
0 & \text{ if } & 0\leq t\leq a.
\end{array}
\right.
$$
Then the formula follows by computing the time derivative.
\end{proof}\\
By using similar arguments in the proof of Lemma \ref{LE4.1} one can easily obtain the following results.
\begin{lemma}\label{LE4.2}
Let Assumptions \ref{ASS2.1} and \ref{ASS2.4} be satisfied. Let $0\leq a<b$ be given. Let $a=t_0<\dots < t_n=b$ be a partition of $[a,b]$. Let $f :[a,b]\rightarrow X$ be the step function defined by
$$
f(t):=\sum_{i=0}^{n-1} x_i \mathbbm{1}_{[t_i,t_{i+1})}(t),\ \forall  t\in[a,b) \ \text{ and } \ f(b)=f(t_{n-1})=x_{n-1}.
$$
Then $t \rightarrow (S_A \ast f(a+\cdot))(t-a)$ is differentiable in $[a,b]$ and  for any $t \in [t_k,t_{k+1}],\ k=0,\dots,n-1$ one has
$$
(S_A \diamond f(a+\cdot))(t-a)=\sum_{i=0}^{k-1}  [S_A(t-t_i)-S_A(t-t_{i+1})]x_i+S_A(t-t_k)x_k.
$$
\end{lemma}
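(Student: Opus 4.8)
The plan is to reduce the step function to a finite superposition of the single-block indicators already handled in Lemma~\ref{LE4.1} and then to exploit the linearity of the convolution $S_A\ast\,\cdot\,$. First I would remove the shift: the substitution $r=a+s$ gives, for $t\in[a,b]$,
$$
(S_A\ast f(a+\cdot))(t-a)=\int_0^{t-a}S_A(t-a-s)f(a+s)\,ds=\int_a^t S_A(t-r)f(r)\,dr .
$$
Writing $f$ as the superposition $f=\sum_{i=0}^{n-1}f_i$ of single-block indicators $f_i:=x_i\,\mathbbm{1}_{[t_i,t_{i+1}]}$ (the values at the finitely many breakpoints are Lebesgue-null and irrelevant to the integral), linearity of the integral yields $(S_A\ast f(a+\cdot))(t-a)=\sum_{i=0}^{n-1}(S_A\ast f_i)(t)$, since each $f_i$ is supported in $[t_i,t_{i+1}]\subset[a,b]$.

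Each summand is precisely the object of Lemma~\ref{LE4.1} applied to the interval $[t_i,t_{i+1}]$ and the vector $x_i$; hence $t\mapsto(S_A\ast f_i)(t)$ is differentiable on $[0,+\infty)$ with
$$
(S_A\diamond f_i)(t)=S_A((t-t_i)^+)x_i-S_A((t-t_{i+1})^+)x_i .
$$
A finite sum of differentiable maps is differentiable, so $t\mapsto(S_A\ast f(a+\cdot))(t-a)$ is differentiable on $[a,b]$ and differentiation term by term gives $(S_A\diamond f(a+\cdot))(t-a)=\sum_{i=0}^{n-1}\big[S_A((t-t_i)^+)-S_A((t-t_{i+1})^+)\big]x_i$.

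It then remains to evaluate this sum on a fixed block $t\in[t_k,t_{k+1}]$, splitting the index set into three ranges. For $i\le k-1$ both arguments are nonnegative, so $(t-t_i)^+=t-t_i$ and $(t-t_{i+1})^+=t-t_{i+1}$ and the term is $[S_A(t-t_i)-S_A(t-t_{i+1})]x_i$. For $i=k$ one has $(t-t_k)^+=t-t_k$ while $(t-t_{k+1})^+=0$; using $S_A(0)=0$ the term collapses to $S_A(t-t_k)x_k$. For $i\ge k+1$ both arguments are non-positive, so $(t-t_i)^+=(t-t_{i+1})^+=0$ and the term vanishes. Collecting the surviving contributions reproduces exactly the claimed identity.

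The argument is essentially routine and the only points needing care are the change-of-variables bookkeeping that turns $(S_A\ast f(a+\cdot))(t-a)$ into $\int_a^t S_A(t-r)f(r)\,dr$, the harmless overlap of the block endpoints when splitting $f$, and the repeated use of $S_A(0)=0$ together with the definition of $\sigma^+$ to kill the boundary terms. Alternatively, and as the more literal analogue of the proof of Lemma~\ref{LE4.1}, one could avoid invoking that lemma and instead compute $\int_a^t S_A(t-r)f(r)\,dr$ directly on $[t_k,t_{k+1}]$, substitute $s=t-r$ in each block, and differentiate under the variable limits using the strong continuity of $\{S_A(t)\}_{t\ge 0}$; both routes lead to the same formula.
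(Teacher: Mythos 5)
Your proof is correct and follows essentially the route the paper intends: the paper gives no separate proof of Lemma~\ref{LE4.2}, stating only that it follows "by similar arguments" to Lemma~\ref{LE4.1}, and your reduction --- splitting the step function into single-block indicators, invoking Lemma~\ref{LE4.1} and linearity, then evaluating the $(\cdot)^+$ terms on $[t_k,t_{k+1}]$ using $S_A(0)=0$ --- is precisely such an argument.
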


Recall that $f:[a,b]\rightarrow X$ is a regulated function if the limit from the right side $\displaystyle \lim_{s\rightarrow t^+}f(s)$ exists for each $t \in [a,b)$, and the limit from the left side $\displaystyle \lim_{s\rightarrow t^-}f(s)$ exists for each $t\in (a,b]$.
For each $b>a\geq 0$, we assume ${\rm Reg}([a,b],X)$ denotes the space of regulated functions from $[a,b]$ to $X$, and we also denote by ${\rm Step}([a,b],X)$ the space of step functions from $[a,b]$ to $X$.

The following lemma extend the property described in Assumption \ref{ASS2.4} for the space of continuous functions to the space of regulated functions.
\begin{lemma}\label{LE4.3}
Let Assumptions \ref{ASS2.1} and \ref{ASS2.4} be satisfied and let $0\leq a<b$ be given. Then for any $f \in {\rm Reg}([a,b],X)$ we have
$$
\Vert (S_A \diamond f(a+\cdot))(t-a) \Vert \leq \delta(t-a) \sup_{s\in [a,t]} \Vert f(s) \Vert, \ \forall t\in [a,b].
$$
\end{lemma}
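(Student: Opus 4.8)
The plan is to prove the estimate first for step functions and then extend to regulated functions by density, since the two estimates in Lemmas~\ref{LE4.1} and \ref{LE4.2} already give me the key building blocks. The strategy rests on two classical facts about regulated functions: first, that $\mathrm{Reg}([a,b],X)$ is the closure of $\mathrm{Step}([a,b],X)$ in the uniform norm, and second, that the convolution operation $f \mapsto (S_A \diamond f(a+\cdot))$ is a bounded linear operator in the appropriate sense, so it behaves well under uniform limits.

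First I would establish the estimate for step functions. Let $f \in \mathrm{Step}([a,b],X)$ be given with respect to a partition $a=t_0<\dots<t_n=b$, so that $f = \sum_{i=0}^{n-1} x_i \mathbbm{1}_{[t_i,t_{i+1})}$ as in Lemma~\ref{LE4.2}. The point is that the explicit formula in Lemma~\ref{LE4.2},
$$
(S_A \diamond f(a+\cdot))(t-a)=\sum_{i=0}^{k-1}[S_A(t-t_i)-S_A(t-t_{i+1})]x_i+S_A(t-t_k)x_k,
$$
is precisely a partial sum of the semi-variation of $S_A$ applied to the $x_i$. Invoking Remark~\ref{REM2.6}, the bounded semi-variation $V^\infty(S_A,0,s)$ controls exactly such sums, and since $\delta(s)=\sup_{r\in[0,s]}V^\infty(S_A,0,r)$, I expect to read off directly that
$$
\Vert (S_A \diamond f(a+\cdot))(t-a)\Vert \leq \delta(t-a)\sup_{s\in[a,t]}\Vert f(s)\Vert.
$$
The small technical care here is that the formula has the extra boundary term $S_A(t-t_k)x_k$ with coefficient $x_k$ rather than a difference; I would handle this by viewing it as the contribution of the final (partial) subinterval $[t_k,t]$ and incorporating it into a refined partition of $[a,t]$, so that the whole expression is genuinely a semi-variation sum over a partition of $[a,t]$ with sample points $x_i$ bounded by $\sup_{s\in[a,t]}\Vert f(s)\Vert$.

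Next I would pass to the limit. Given $f\in\mathrm{Reg}([a,b],X)$, choose a sequence $f_m\in\mathrm{Step}([a,b],X)$ with $\sup_{s\in[a,b]}\Vert f_m(s)-f(s)\Vert \to 0$. Since $f_m$ is in particular continuous-enough for Theorem~\ref{TH2.7}/Remark~\ref{REM2.5} to apply after suitable approximation, the map $f\mapsto(S_A\diamond f(a+\cdot))$ is well-defined and linear, and the step-function estimate gives
$$
\Vert (S_A\diamond(f_m-f_{m'})(a+\cdot))(t-a)\Vert \leq \delta(t-a)\sup_{s\in[a,t]}\Vert f_m(s)-f_{m'}(s)\Vert,
$$
so $\{(S_A\diamond f_m(a+\cdot))(t-a)\}_m$ is Cauchy in $X_0$, uniformly in $t$. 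Defining $(S_A\diamond f(a+\cdot))(t-a)$ as this limit and passing to the limit in the step-function inequality yields the claimed bound with the same $\delta$.

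The main obstacle, and the step that needs the most care, is ensuring that the convolution operator $f\mapsto(S_A\diamond f(a+\cdot))$ extends consistently from continuous (or step) functions to regulated functions, i.e. that the limit defining $(S_A\diamond f(a+\cdot))$ does not depend on the approximating sequence and that it genuinely equals the distributional time-derivative $\frac{d}{dt}(S_A\ast f)(t)$ in this broader setting. This is where I would lean on Remark~\ref{REM2.6}: since bounded semi-variation of $S_A$ is equivalent to Assumption~\ref{ASS2.4}, the operator $f\mapsto(S_A\diamond f(a+\cdot))(t-a)$ is exactly a Riemann--Stieltjes-type integral $\int S_A(t-\cdot)\,df$ against the regulated integrand, which is classically well-defined for regulated $f$ when the integrator has bounded semi-variation. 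Identifying the two notions — the limit of step-function convolutions and the Stieltjes integral — secures both well-definedness and the uniform bound simultaneously, and completes the proof.
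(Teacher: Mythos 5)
Your proposal is correct and follows essentially the same route as the paper: establish the bound for step functions via the explicit formula of Lemma~\ref{LE4.2}, recognize the resulting sum (including the boundary term, using $S_A(0)=0$) as a semi-variation sum controlled by $V^\infty(S_A,0,t-a)\leq\delta(t-a)$ from Remark~\ref{REM2.6}, and then extend by density of ${\rm Step}([a,b],X)$ in ${\rm Reg}([a,b],X)$ together with the bounded linear extension theorem. The only cosmetic difference is that the paper makes the "refined partition" step explicit by the change of variables $\bar{t}_i=t-t_{k-i+1}$, turning the expression into a semi-variation sum over a partition of $[0,t-a]$ rather than of $[a,t]$.
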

\begin{proof}
Since ${\rm Step}([a,b],X)$ is dense in ${\rm Reg}([a,b],X)$ for the topology of uniform convergence (see Dieudonne \cite[p.139]{Dieudonne}), it is sufficient to prove the result for $f \in {\rm Step}([a,b],X)$ and apply the linear extension theorem to the  bounded linear operator
$$
f \in {\rm Step}([a,b],X)  \mapsto (S_A \diamond f)(\cdot).
$$
Let $f \in {\rm Step}([a,b],X)$ be a non zero step function given by
$$
f(t):=\sum_{i=0}^{n-1} x_i \mathbbm{1}_{[t_i,t_{i+1})}(t),\ \forall t\in  [a,b),\ \text{ and } f(b)=f(t_{n-1})=x_{n-1}
$$
with $a=t_0<t_1<\cdots<t_n=b$. Let $t\in [a,b]$ be given and fixed. Then there exists $k\in \{0,\dots,n-1\}$ such that $t\in [t_k,t_{k+1}]$. Hence by Lemma \ref{LE4.2} we have
$$
\begin{array}{llll}
(S_A \diamond f(a+\cdot))(t-a)&=& \displaystyle \sum_{i=0}^{k-1}  [S_A(t-t_i)-S_A(t-t_{i+1})]x_i+S_A(t-t_k)x_k\\
  &=&  \displaystyle \sum_{i=0}^{k}  S_A(t-t_{k-i})x_{k-i}-\sum_{i=1}^{k}S_A(t-t_{k-i+1}) x_{k-i}\\
\end{array}
$$
Setting
$$
\bar{t}_i=t-t_{k-i+1},\  i=1,\dots, k \ \text{ and } \ \bar{t}_{0}:=0
$$
and
$$
\bar{x}_i:=\dfrac{x_{k-i}}{\alpha}, \ i=0,\dots, k
$$
with $\alpha:=\max_{i=1,\dots,k} \Vert x_i \Vert>0$. Then we obtain
$$
\begin{array}{llll}
(S_A \diamond f(a+\cdot))(t-a) &=& \alpha \displaystyle \sum_{i=0}^{k}  [S_A(\bar{t}_{i+1})-S_A(\bar{t}_{i})]\bar{x}_i.\\
\end{array}
$$
Since $0=\bar{t}_0<\cdots< \bar{t}_{k+1}=t-a$ and $\Vert \bar{x}_i \Vert\leq 1$ for all $i=1,\dots,k$, it follows from Remark \ref{REM2.6} that
$$
\Vert (S_A \diamond f(a+\cdot))(t-a)\Vert  \leq \alpha V^{\infty }(S_A,0,t-a)\leq \alpha \delta(t-a)
$$
and the result follows by observing that
$$
\alpha:=\max_{i=1,\dots,k} \Vert x_i \Vert=\sup_{s\in [a,t]}\Vert f(s) \Vert.
$$
\end{proof}

In order to prove the invariance property of a closed subset $C_0 \subset X_0$ we need to make the following assumption.
\begin{assumption}[Sub-Tangential Condition]\label{ASS4.4} Let $C_0$ be a closed subset of $X_0$.  We assume that there exists a bounded linear operator $B : X_0\rightarrow X$ such that for each  $\xi >0$ and each $\sigma>0$ there exists $\gamma=\gamma(\xi,\sigma)>0$ such that
$$
\lim_{h\rightarrow 0^+}\dfrac{1}{h} d\left( T_{(A-\gamma B)_0}(h)x+S_{A-\gamma B}(h)
\left[F(t,x)+\gamma Bx \right],C_0\right)=0,
$$
whenever $x \in C_0$ with  $ \Vert x \Vert \leq \xi $ and $t \in [0, \sigma]$. Here the map  $x \to d(x,C_0)$ is the Hausdorff semi-distance which is defined as
$$
d(x,C_0):=\inf_{y\in C_0} \Vert x-y \Vert.
$$
\end{assumption}
\begin{remark}
Recall that the usual assumption for the non negativity of the mild solutions of \eqref{1.1} is covered by Assumption \ref{ASS4.4}. In fact $X_{0+}$ is positively invariant with respect to semiflow generated by \eqref{1.1} if  for each $\xi >0$ and each $\sigma>0$ there exists $\gamma=\gamma(\xi,\sigma)>0$ such that
$$
T_{(A-\gamma B)_0}(h)x+S_{A-\gamma B}(h)[F(t,x)+\gamma Bx] \in X_{0+}
$$
whenever $x \in X_{0+}$ with  $ \Vert x \Vert \leq \xi $ and $t \in [0, \sigma]$.
\end{remark}
The main result of this article is the following theorem. 
\begin{theorem}[Positive invariant Subset] \label{TH4.6}
Let Assumptions \ref{ASS2.1}, \ref{ASS2.4}, \ref{ASS3.1} and \ref{ASS4.4} be satisfied. Then for each $x\in C_{0}$ and each $s\geq 0$,  we have
$$
U(t,s)x\in C_0, \forall t \in  \left[
s,s+\tau \left( s,x\right) \right).
$$
\end{theorem}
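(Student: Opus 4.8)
Fix $x\in C_0$ and $s\ge 0$. Since $t\mapsto U(t,s)x$ is continuous on $[s,s+\tau(s,x))$ and $C_0$ is closed, it suffices to prove the inclusion on each compact subinterval $[s,s+T]$ with $T<\tau(s,x)$; there the solution is bounded, say $\sup_{t\in[s,s+T]}\|U(t,s)x\|\le\xi_0$. I first shift the linear part: with the operator $B$ from Assumption \ref{ASS4.4} and, for $\xi:=\xi_0+1$ and $\sigma:=s+T$, the constant $\gamma=\gamma(\xi,\sigma)$ it furnishes, the problem \eqref{1.1} reads $u'=(A-\gamma B)u+G(t,u)$ with $G(t,y):=F(t,y)+\gamma By$. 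By Theorem \ref{TH2.8} the operator $A-\gamma B$ again satisfies Assumptions \ref{ASS2.1} and \ref{ASS2.4}, with a semi-variation bound $\delta_{A-\gamma B}$ tending to $0$ at $0^+$, so I fix once and for all a length $\beta>0$ with $\delta_{A-\gamma B}(\beta)\,L<1$, where $L:=K(\sigma,\xi)+\gamma\|B\|$ is a Lipschitz constant for $G(t,\cdot)$ on the relevant ball. Writing $v(t):=U(s+t,s)x$, formula \eqref{3.2} becomes $v(t)=T_{(A-\gamma B)_0}(t)x+(S_{A-\gamma B}\diamond g)(t)$ with $g(r):=G(s+r,v(r))$. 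Since the argument can be restarted from any time at which $v$ is already known to lie in $C_0$, it is enough to prove the local statement: if $v(0)=x\in C_0$, then $v(t)\in C_0$ for all $t\in[0,\beta]$; covering $[0,T]$ by finitely many such intervals then finishes the proof.

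\textbf{Approximate solutions inside $C_0$.} Fix $\epsilon>0$ and, in Nagumo's spirit, use Assumption \ref{ASS4.4} to build an $\epsilon$-approximate polygon with nodes in $C_0$: set $c_0=x$ and, along a partition $0=r_0<\dots<r_n=t\le\beta$ of small mesh, put $p_{i+1}:=T_{(A-\gamma B)_0}(h_i)c_i+S_{A-\gamma B}(h_i)G(s+r_i,c_i)$ with $h_i=r_{i+1}-r_i$, and choose $c_{i+1}\in C_0$ with $\|c_{i+1}-p_{i+1}\|\le\epsilon h_i$, possible once $h_i$ is small by the sub-tangential condition. Telescoping this recursion and using the constant-input case of \eqref{2.5}, namely $T_{(A-\gamma B)_0}(a)S_{A-\gamma B}(b)=S_{A-\gamma B}(a+b)-S_{A-\gamma B}(a)$, together with Lemma \ref{LE4.2}, I obtain
\[
c_n=T_{(A-\gamma B)_0}(t)x+\big(S_{A-\gamma B}\diamond\chi_n\big)(t)+R_n,
\]
where $\chi_n$ is the step function equal to $G(s+r_i,c_i)$ on $[r_i,r_{i+1})$ and $R_n=\sum_{i}T_{(A-\gamma B)_0}(t-r_{i+1})(c_{i+1}-p_{i+1})$. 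The decisive point is that the accumulated tangency errors pass only through the $C_0$-semigroup, so by the semigroup law they do not compound: $\|R_n\|\le M_{A-\gamma B}e^{\omega_{A-\gamma B}\beta}\,\epsilon\sum_i h_i\le M_{A-\gamma B}e^{\omega_{A-\gamma B}\beta}\,\epsilon\beta$.

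\textbf{Global comparison and closing.} Subtracting the two representations gives $v(t)-c_n=\big(S_{A-\gamma B}\diamond(g-\chi_n)\big)(t)-R_n$, and now I apply Lemma \ref{LE4.3} to the regulated function $g-\chi_n$ \emph{on the whole interval} rather than term by term:
\[
\|v(t)-c_n\|\le \delta_{A-\gamma B}(\beta)\sup_{r\in[0,t]}\|g(r)-\chi_n(r)\|+\|R_n\|.
\]
Bounding $\|g(r)-\chi_n(r)\|$ on $[r_i,r_{i+1})$ by the Lipschitz constant $L$ plus a modulus-of-continuity term vanishing with the mesh, one gets $\sup_{[0,t]}\|g-\chi_n\|\le L\max_j\|v(r_j)-c_j\|+o_{\mathrm{mesh}}(1)$. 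Setting $\Theta:=\max_j\|v(r_j)-c_j\|$ and maximizing over the nodes yields
\[
\Theta\le \delta_{A-\gamma B}(\beta)\,L\,\Theta+\delta_{A-\gamma B}(\beta)\,o_{\mathrm{mesh}}(1)+M_{A-\gamma B}e^{\omega_{A-\gamma B}\beta}\epsilon\beta .
\]
Because $\delta_{A-\gamma B}(\beta)L<1$, this solves for $\Theta$; letting the mesh tend to $0$ and then $\epsilon\to0$, and recalling that any $t\in[0,\beta]$ may be taken as a node, gives $d(v(t),C_0)\le\|v(t)-c_n\|\to0$, i.e. $v(t)\in C_0$ on $[0,\beta]$. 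Iterating over the finitely many subintervals proves the theorem.

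\textbf{Where the difficulty lies.} The real obstacle is purely the non–Hille--Yosida nature of the problem: the semigroup constant $M_{A-\gamma B}$ may exceed $1$, and the semi-variation $\delta_{A-\gamma B}(h)$ need not be $O(h)$, so $\delta_{A-\gamma B}(h)/h$ can blow up as $h\to0^+$. A naive step-by-step Gronwall estimate is therefore hopeless—the Lipschitz contribution would produce a factor $\prod_i(1+\delta_{A-\gamma B}(h_i)L)$ that diverges under refinement, and the $M_{A-\gamma B}$ factors would compound geometrically. The entire design above is arranged to sidestep both pathologies: the telescoping/cocycle structure routes the tangency errors through the semigroup so that only a single $M_{A-\gamma B}$ ever appears, while Lemma \ref{LE4.3} is used globally with the smallness $\delta_{A-\gamma B}(\beta)L<1$ in place of any per-step bound. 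The remaining technical point is the rigorous construction of the $C_0$-valued polygon, namely an adaptive partition whose mesh still tends to $0$ while respecting the point-dependent step thresholds of Assumption \ref{ASS4.4} and the a priori control $\|c_i\|\le\xi$.
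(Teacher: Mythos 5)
Your overall strategy is the paper's: shift the linear part by $\gamma B$, build a Nagumo polygon with nodes in $C_0$ via the sub-tangential condition, telescope with the identity $T_{(A-\gamma B)_0}(a)S_{A-\gamma B}(b)=S_{A-\gamma B}(a+b)-S_{A-\gamma B}(a)$ so that the tangency defects pass only through the semigroup, apply the regulated-function extension (Lemma \ref{LE4.3}) globally on the interval rather than per step, and close with the contraction $\delta_\gamma(\tau)\Lambda<1$. Your endgame is a minor variant: you compare the polygon directly with the known solution $v=U(\cdot+s,s)x$, whereas the paper shows the approximations $u_\varepsilon$ form a Cauchy sequence and identifies the limit with the mild solution; both are fine.

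The genuine gap is the one you flag yourself and then leave unresolved: the existence of the polygon. Assumption \ref{ASS4.4} only gives, for each fixed $(t,x)$ with $\Vert x\Vert\le\xi$, a point-dependent threshold below which the distance quotient is small; writing ``choose $c_{i+1}\in C_0$ with $\Vert c_{i+1}-p_{i+1}\Vert\le\epsilon h_i$, possible once $h_i$ is small'' presupposes that a partition of prescribed small mesh will work, but the admissible step at stage $i$ depends on the node $c_i$ you have just constructed, and these thresholds could a priori shrink to $0$ so fast that the cumulative time $\sum_i h_i$ stalls strictly before $\beta$. This is not a routine technicality; it is where the paper spends most of its effort. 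The paper's construction takes $r_k:=\sup I_k$ and $l_{k+1}=l_k+r_k/2$ (each step a definite fraction of the best admissible one), proves by induction the a priori bound $\Vert y_k\Vert\le\rho$ — needed before the Lipschitz constant and the sub-tangential condition can even be invoked at the next node; your ``$\Vert c_i\Vert\le\xi$'' is likewise assumed, not proved — and then in Lemma \ref{LE4.8} rules out stalling by contradiction: if $l_k\uparrow l^*\le\tau$ with all $l_k<\tau$, then $r_k\to 0$, but the telescoped estimate shows $(y_k)$ is Cauchy, its limit $y^*$ lies in the closed set $C_0$, and the sub-tangential condition at $(l^*,y^*)$ together with continuity of $F_\gamma$ and of the semigroup and the $1$-Lipschitz property of $d(\cdot,C_0)$ produces a single $\eta^*>0$ belonging to $I_k$ for all large $k$, contradicting $r_k\to 0$. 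Without this (or an equivalent) argument your proof does not go through; with it, your estimates are sound and coincide with the paper's.
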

The rest of this section is devoted to the proof of Theorem \ref{TH4.6}. We fix the initial condition $x_0 \in C_0$ and $s=0$. Set $\rho:= 2 (\Vert x_0 \Vert+1)$ and  define
$$
F_\gamma(t,x):=F(t,x)+\gamma Bx, \ \forall (t,x) \in [0,+\infty)\times X_0.
$$
Let $\Lambda:= \Lambda(\rho)>0$ be the constant  such that
\begin{equation} \label{4.1}
\Vert F_\gamma(t,x)-F_\gamma(t,y)\Vert \leq \Lambda \Vert x-y \Vert, \forall t\in [0,\rho],\ \forall x,y \in B(0,\rho).
\end{equation}
Therefore by setting
 $$
 \Gamma:=2\Lambda \rho+\sup_{t\in [0,\rho]}\Vert F_\gamma(t,x_0) \Vert,
 $$
 we  obtain
\begin{equation}\label{4.2}
 \Vert F_\gamma(t,x) \Vert \leq \Gamma,\  \forall t\in [0,\rho],\  \forall x\in B(0,\rho).
\end{equation}
Let $\gamma:=\gamma(\rho)>0$ be a constant such that
\begin{equation}\label{4.3}
\lim_{h\rightarrow 0^+} \dfrac{1}{h} d\left( T_{(A-\gamma B)_0}(h)x+S_{A-\gamma B}(h)F_\gamma(t,x),C_0\right)=0,
\end{equation}
whenever $x \in C_0$,  $ \Vert x \Vert \leq \rho $ and $t \in [0, \rho]$.

Then by Theorem \ref{TH2.8}, $A-\gamma B: D(A)\subset X\rightarrow X$ satisfies Assumptions \ref{ASS2.1} and \ref{ASS2.4}. Hence combining Theorem \ref{TH2.8} and Lemma \ref{LE4.3} we know that if we fix $\tau _{\gamma}>0$ such that
\begin{equation*}
\gamma \delta \left( \tau _{\gamma}\right) \left\| B\right\| _{\mathcal{L}\left(
X_{0},X\right) }<1,
\end{equation*}
then there exists a non decreasing map $\delta_\gamma : [0,+\infty)\rightarrow [0,+\infty)$  with
$$\lim_{t\rightarrow 0^+}\delta_\gamma(t)=0$$ such that for each $f \in {\rm Reg}([a,b],X)$, $0\leq a<b\leq \tau_\gamma$
\begin{equation}\label{4.4}
\left\| \left( S_{A-\gamma B}\diamond f(a+\cdot)\right)(t-a) \right\| \leq \delta_\gamma(t-a)\sup_{s\in \left[ a,t\right] }\left\|
f(s)\right\| ,\;\forall t\in \left[ a,b\right] .
\end{equation}
To shorten the notations we set
$$
\omega_\gamma:=\omega_{_{A-\gamma B}} \ \text{ and } \ M_\gamma:= M_{_{A-\gamma B}}.
$$
 Let $\tau\in (0,\min(\tau \left( 0,x\right),\tau_\gamma,\rho))$ be small enough to satisfy 
\begin{equation}\label{4.5}
\Gamma \delta_\gamma(h)+M_{\gamma} e^{\omega_{\gamma}^+ h} h+\Vert T_{(_{A-\gamma B})_0}(h)x_0\Vert\leq \rho,\ \forall h\in [0,\tau]
\end{equation}
with
$$
\omega_\gamma^+=\max(0,\omega_\gamma),
$$
and 
\begin{equation}\label{4.6}
0<\Lambda \delta_\gamma (\tau)<1,
\end{equation}
where $\Lambda$ has been defined as an upper bound for the Lipschitz norm of $F_\gamma$ on $B(0,\rho)\cap C_0$ in \eqref{4.1}. 

\medskip

\noindent \textbf{Construction of the knots :} Let $ \varepsilon \in (0,1)$ be fixed.  We define by induction a sequence $(l_k,y_k) \in [0,\tau]\times C_0$ where the index $k \in \mathbb{N}$ is a non-negative integer possibly unbounded.  For $k=0$ we start with
$$
l_0= 0 \ \text{ and } y_0=x_0\in C_0.
$$
In order to compute the next increment, we define for each integer $k \geq 0$ 
\begin{equation}\label{4.7}
 \begin{array}{ll}
I_k = \left \lbrace \eta \in (0,\varepsilon^*) : \right. &
  \Vert F_\gamma(l,y)-F_\gamma(l_k,y_k)\Vert\leq \varepsilon,\  \forall \vert l-l_k\vert\leq \eta,  \ \forall y\in B(y_k,\eta)\cap C_0,\\
  &  \left.\dfrac{1}{\eta} d\left( T_{(A-\gamma B)_0}(\eta)y_k+S_{A-\gamma B}(\eta)F_\gamma(l_k,y_k),C_0\right) <\dfrac{\varepsilon}{2} \right. \\
  &  \left. \text{ and }  \Vert T_{(A-\gamma B)_0}(\eta)y_k-y_k \Vert  \leq \varepsilon\right\rbrace
 \end{array}
\end{equation}
where $\varepsilon^*:=\min(\varepsilon,\rho)$.

Set
\begin{equation}\label{4.8}
 r_k:= \sup(I_k)>0 \ \text{ and } l_{k+1}:=\min \left( l_k+\dfrac{r_k}{2},\tau\right).
\end{equation}
We define  
$$
y_{k+1}= y_k \in C_0 \text{ if } l_{k+1}=\tau. 
$$
Otherwise if $l_{k+1}=l_k+\dfrac{r_k}{2}<\tau$, then 
$$
0< l_{k+1}-l_{k}=\dfrac{r_k}{2}<r_k
$$ 
hence  
$$
l_{k+1}-l_k \in I_k.
$$
Thus, it follows that
$$
\dfrac{1}{l_{k+1}-l_k} d\left( T_{(A-\gamma B)_0}(l_{k+1}-l_k)y_k+S_{A-\gamma B}(l_{k+1}-l_k)F_\gamma(l_k,y_k),C_0\right)<\dfrac{\varepsilon}{2}.
$$
Therefore, we can find  $y_{k+1} \in C_0$ satisfying 
\begin{equation}\label{4.9}
\dfrac{1}{l_{k+1}-l_k}\Vert  T_{(A-\gamma B)_0}(l_{k+1}-l_k)y_k+S_{A-\gamma B}(l_{k+1}-l_k)F_\gamma(l_k,y_k)-y_{k+1}\Vert \leq  \dfrac{\varepsilon}{2}.
\end{equation}
Setting
$$
H_k:=\dfrac{1}{l_{k+1}-l_k} \left[ y_{k+1}-T_{(A-\gamma B)_0}(l_{k+1}-l_k)y_k-S_{A-\gamma B}(l_{k+1}-l_k)F_\gamma(l_k,y_k) \right] \in X_0.
$$
Then it follows that
\begin{equation}\label{4.10}
H_k\in X_0 \text{ and } \Vert H_k \Vert \leq \dfrac{\varepsilon}{2}
\end{equation}
and
\begin{equation}\label{4.11}
y_{k+1}=T_{(A-\gamma B)_0}(l_{k+1}-l_k)y_k+S_{A-\gamma B}(l_{k+1}-l_k)F_\gamma(l_k,y_k)+(l_{k+1}-l_k)H_k\in C_0.
\end{equation}
\begin{lemma}\label{LE4.7}
Let Assumptions \ref{ASS2.1}, \ref{ASS2.4},  \ref{ASS3.1} and \ref{ASS4.4} be  satisfied. Then the knots $(l_k,y_k)$, $k\geq 0$ satisfy the following properties
\begin{itemize}
\item[{\rm (i)}] For all $k>m\geq 0$ we have
\begin{equation}\label{4.12}
\begin{array}{lll}
 y_k&=& \displaystyle T_{(A-\gamma B)_0}(l_{k}-l_{m})y_m+\sum_{i=m}^{k-1} (l_{i+1}-l_{i})T_{(A-\gamma B)_0}(l_{k}-l_{i+1})H_i\\
 & & \displaystyle+\sum_{i=m}^{k-1} T_{(A-\gamma B)_0}(l_{k}-l_{i+1})S_{A-\gamma B}(l_{i+1}-l_i)F_\gamma(l_i,y_i)
\end{array}
\end{equation}
\item[{\rm (ii)}]  $y_k \in B(0,\rho)\cap C_0$ for any $k\geq 0$.
\item[{\rm (iii)}] For all $k>m\geq 0$ we have
$$
\Vert y_k-T_{(A-\gamma B)_0}(l_{k}-l_{m})y_m\Vert \leq \Gamma \delta_\gamma(l_k-l_m)+\dfrac{\varepsilon}{2} M_\gamma e^{\omega_\gamma^+ (l_k-l_m)} (l_k-l_m).
$$
%with $\omega_\gamma^+=\max(0,\omega_\gamma)$.
\end{itemize}
\end{lemma}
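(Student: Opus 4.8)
My plan is to prove the three assertions by induction on $k$, handling (i) on its own and then (ii) and (iii) by a \emph{simultaneous} induction so as to avoid circular reasoning. For (i), I would fix $m$ and induct on $k$, using the one-step recursion \eqref{4.11} together with the semigroup property $T_{(A-\gamma B)_0}(a)T_{(A-\gamma B)_0}(b)=T_{(A-\gamma B)_0}(a+b)$. Assuming \eqref{4.12} holds at level $k$, I substitute it into \eqref{4.11}: the factor $T_{(A-\gamma B)_0}(l_{k+1}-l_k)$ distributes over every summand and merges with each exponent $l_k-l_{i+1}$ to give $l_{k+1}-l_{i+1}$, while the two new terms $S_{A-\gamma B}(l_{k+1}-l_k)F_\gamma(l_k,y_k)$ and $(l_{k+1}-l_k)H_k$ (recalling $T_{(A-\gamma B)_0}(0)=I$) extend both sums to the index $i=k$. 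This gives \eqref{4.12} at level $k+1$ and is purely algebraic.

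The heart of the argument is the bound (iii), which I would read off from (i). Writing $y_k-T_{(A-\gamma B)_0}(l_k-l_m)y_m=R+\Sigma$, where $R$ collects the $H_i$-terms and $\Sigma$ the $S_{A-\gamma B}$-terms, the remainder $R$ is immediate: using the Hille--Yosida estimate $\|T_{(A-\gamma B)_0}(t)\|\le M_\gamma e^{\omega_\gamma t}$, the bound $\|H_i\|\le\varepsilon/2$ from \eqref{4.10}, and $\sum_{i=m}^{k-1}(l_{i+1}-l_i)=l_k-l_m$, one gets $\|R\|\le\tfrac{\varepsilon}{2}M_\gamma e^{\omega_\gamma^+(l_k-l_m)}(l_k-l_m)$. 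The term $\Sigma$, however, cannot be estimated summand by summand, since in the non-Hille--Yosida setting there is no good operator-norm control on $S_{A-\gamma B}(t)$; this is precisely what forces the use of Lemma \ref{LE4.3}.

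The key step is thus to recognize $\Sigma$ as a single $\diamond$-convolution. Let $g$ be the step function on $[l_m,l_k]$ with $g\equiv F_\gamma(l_i,y_i)$ on $[l_i,l_{i+1})$. I claim $\Sigma=\left(S_{A-\gamma B}\diamond g(l_m+\cdot)\right)(l_k-l_m)$, and I would prove it by induction on the number of knots, peeling off the first subinterval via the shift formula \eqref{2.5} with $t=l_{m+1}-l_m$ and $s=l_k-l_{m+1}$. The leading piece $T_{(A-\gamma B)_0}(l_k-l_{m+1})\left(S_{A-\gamma B}\diamond g(l_m+\cdot)\right)(l_{m+1}-l_m)$ reduces, since $g$ is constant on $[l_m,l_{m+1})$, via Lemma \ref{LE4.1} to $T_{(A-\gamma B)_0}(l_k-l_{m+1})S_{A-\gamma B}(l_{m+1}-l_m)F_\gamma(l_m,y_m)$, the $i=m$ summand of $\Sigma$; the shifted remainder $\left(S_{A-\gamma B}\diamond g(l_{m+1}+\cdot)\right)(l_k-l_{m+1})$ is handled by the induction hypothesis and supplies the summands $i=m+1,\dots,k-1$. (Equivalently, one matches the product form to the difference form produced by Lemma \ref{LE4.2}.) With the identity in hand, Lemma \ref{LE4.3} yields $\|\Sigma\|\le\delta_\gamma(l_k-l_m)\sup_{s\in[l_m,l_k]}\|g(s)\|\le\Gamma\,\delta_\gamma(l_k-l_m)$, the last inequality using \eqref{4.2} and $\|y_i\|\le\rho$. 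Adding the bounds on $R$ and $\Sigma$ gives (iii).

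Finally (ii) is obtained together with (iii) by the simultaneous induction. The base case is $\|y_0\|=\|x_0\|<\rho$. Assuming $y_0,\dots,y_k\in B(0,\rho)\cap C_0$, the values $F_\gamma(l_i,y_i)$ for $i\le k$ are all bounded by $\Gamma$ through \eqref{4.2}, so the estimate of $\Sigma$ above is legitimate and (iii) holds at level $k+1$; taking $m=0$, using $y_{k+1}\in C_0$ from \eqref{4.11}, $\varepsilon/2<1$, and \eqref{4.5}, I conclude $\|y_{k+1}\|\le\|T_{(A-\gamma B)_0}(l_{k+1})x_0\|+\Gamma\delta_\gamma(l_{k+1})+M_\gamma e^{\omega_\gamma^+ l_{k+1}}l_{k+1}\le\rho$, which closes the induction. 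The main obstacle is the convolution identity of the third paragraph: it is what allows Lemma \ref{LE4.3} to substitute for the missing norm control on $S_{A-\gamma B}$, and it must be interleaved with the simultaneous induction so that the $\Gamma$-bound on $F_\gamma(l_i,y_i)$ is available exactly when it is needed.
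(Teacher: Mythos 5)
Your proposal is correct and follows essentially the same route as the paper: iterate the one-step recursion to get (i), split the increment into the $H_i$-part (bounded via the Hille--Yosida estimate and $\|H_i\|\le\varepsilon/2$) and the $S_{A-\gamma B}$-part, recognize the latter as $(S_{A-\gamma B}\diamond f_\gamma(l_m+\cdot))(l_k-l_m)$ for the step function $f_\gamma$ so that the regulated-function estimate \eqref{4.4} (Lemma \ref{LE4.3}) applies, and run (ii) and (iii) through the same induction so that the $\Gamma$-bound on $F_\gamma(l_i,y_i)$ is available. Your derivation of the convolution identity via \eqref{2.5} and Lemma \ref{LE4.1} is a minor variant of the paper's use of $T_{(A-\gamma B)_0}(\sigma)S_{A-\gamma B}(h)=S_{A-\gamma B}(\sigma+h)-S_{A-\gamma B}(\sigma)$ together with Lemma \ref{LE4.2}, which you yourself note as the equivalent alternative.
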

\begin{proof} \textbf{Proof of (i):} Let $k>m\geq 0$ be given.  Recall that for all $i=0,\dots,k-1$ we have
$$
 y_{i+1}=T_{(A-\gamma B)_0}(l_{i+1}-l_{i})y_{i}+S_{A-\gamma B}(l_{i+1}-l_{i})F_\gamma(l_{i},y_{i})+(l_{i+1}-l_{i})H_i.
 $$
Define the linear operator $L_i : X_0 \rightarrow X_0$ by
 $$
 L_i:=T_{(A-\gamma B)_0}(l_{i+1}-l_{i}),\ i=0,\dots,k-1
 $$
Hence
 $$
 y_{i+1}=L_iy_i+S_{A-\gamma B}(l_{i+1}-l_{i})F_\gamma(l_{i},y_{i})+(l_{i+1}-l_{i})H_i,\ i=1,\dots,k-1.
 $$
In order to use a variation of constants formula, we introduce the evolution family
 $$
 U(i,j)=L_{i-1}\cdots L_j \ \text{ if } i>j \ \text{ and } U(i,i)=I_{X_0}.
 $$
Then it follows from the semigroup property that 
 $$
 U(i,j)=T_{(A-\gamma B)_0}(l_{i}-l_{j}),\ \text{ if } i\geq j .
 $$
By using a discrete variation of constants formula, we have for integers $ k \geq m \geq 0$ 
\begin{equation*}
\begin{array}{lll}
 y_k&=&\displaystyle  U(k,m)y_m+\sum_{i=m}^{k-1} U(k,i+1)[S_{A-\gamma B}(l_{i+1}-l_{i})F_\gamma(l_{i},y_{i})+(l_{i+1}-l_{i})H_i] \\
 & =& \displaystyle T_{(A-\gamma B)_0}(l_{k}-l_{m})y_m+\sum_{i=m}^{k-1} (l_{i+1}-l_{i}) T_{(A-\gamma B)_0}(l_{k}-l_{i+1})H_i\\
 & &+ \displaystyle \sum_{i=m}^{k-1} T_{(A-\gamma B)_0}(l_{k}-l_{i+1})S_{A-\gamma B}(l_{i+1}-l_{i})F_\gamma(l_{i},y_{i}).
 \end{array}
\end{equation*}
\textbf{Proof of (ii):} We will argue by recurrence. The property is true for $k=0$ since $y_0=x_0 \in B(0,\rho)\cap C_0$. Assume that for $k\geq 1$
$$
y_0,\dots, y_{k-1} \in  B(0,\rho)\cap C_0.
$$
We are in a position to show that $y_k \in B(0,\rho)\cap C_0$. In view of \eqref{4.12}, for any $m=0,\dots,k-1$, we have
 $$
y_k-T_{(A-\gamma B)_0}(l_{k}-l_{m})y_m=\sum_{i=m}^{k-1} T_{(A-\gamma B)_0}(l_{k}-l_{i+1})[S_{A-\gamma B}(l_{i+1}-l_{i})F_\gamma(l_{i},y_{i})+(l_{i+1}-l_{i})H_i].
 $$
Then it follows that
$$
\Vert y_k-T_{(A-\gamma B)_0}(l_{k}-l_{m})y_m \Vert\leq \Vert W_{k,m} \Vert+\Vert Z_{k,m} \Vert,
$$
where
$$
W_{k,m}:=\sum_{i=m}^{k-1} T_{(A-\gamma B)_0}(l_{k}-l_{i+1})S_{A-\gamma B}(l_{i+1}-l_{i})F_\gamma(l_{i},y_{i})
$$
and
$$
Z_{k,m}:=\sum_{i=m}^{k-1} (l_{i+1}-l_{i})T_{(A-\gamma B)_0}(l_{k}-l_{i+1})H_i.
$$
Next, we do estimates of $W_{k,m}$ and $Z_{k,m}$. Since $H_i \in X_0$ and $\Vert H_i \Vert \leq \dfrac{\varepsilon}{2}$, for any $i=m,\dots,k-1$, it is easy to obtain from \eqref{4.10} that
\begin{equation}\label{4.13}
\begin{array}{lll}
 \Vert Z_{k,m}\Vert &\leq &  \displaystyle \sum_{i=m}^{k-1} (l_{i+1}-l_{i})\dfrac{\varepsilon}{2} M_\gamma e^{\omega_\gamma (l_{i+1}-l_{i})}\vspace{0.2cm}\\
  &\leq & \displaystyle\dfrac{\varepsilon}{2} M_\gamma e^{\omega_\gamma^+ (l_k-l_m)} (l_k-l_m),
\end{array}
\end{equation}
where
 $$
 \omega_\gamma^+=\max(0,\omega_\gamma).
 $$
In order to estimate $W_{k,m}$, we will rewrite it in a more convenient form. Using the following relationship
 $$
T_{(A-\gamma B)_0}(\sigma)S_{A-\gamma B}(h)=S_{A-\gamma B}(\sigma+h)-S_{A-\gamma B}(\sigma),\ \forall \sigma\geq 0,\forall h\geq 0,
$$
we see that
 $$
W_{k,m}= \sum_{i=m}^{k-1} [ S_{A-\gamma B}(l_{k}-l_{i})-S_{A-\gamma B}(l_{k}-l_{i+1}) ]F_\gamma(l_{i},y_{i}).
 $$
 By Lemma \ref{LE4.2} we have 
 $$
 W_{k,m}= (S_{A-\gamma B} \diamond f_\gamma(l_m+\cdot))(l_k-l_m)
 $$
with step function
 $$
 f_\gamma(t)=F_\gamma(l_{i},y_{i}),\ \forall t\in [l_i,l_{i+1}),\ i=m,\dots,k-1\ \text{ and } \ f_\gamma(l_k)=F_\gamma(l_{k-1},y_{k-1}).
 $$
Therefore by using the inequality \eqref{4.4} with $a=l_m$ and $b=l_k$  it follows that
\begin{equation}\label{4.14}
\begin{array}{lll}
\displaystyle  \left \Vert W_{k,m} \right \Vert  &= & \displaystyle  \left \Vert(S_{A-\gamma B} \diamond f_\gamma(l_m+\cdot))(l_k-l_m) \right \Vert.\\
&\leq & \displaystyle   \delta_\gamma(l_k-l_m) \sup_{s \in [l_m,l_k]} \Vert f_\gamma(s) \Vert \\
&=& \displaystyle   \delta_\gamma(l_k-l_m) \max_{i=m,\dots,k-1} \Vert F_\gamma(l_{i},y_{i}) \Vert.
 \end{array}
\end{equation}
By using \eqref{4.2} and the induction assumption, we deduce that 
$$
\max_{i=m,\dots,k-1} \Vert F_\gamma(l_{i},y_{i}) \Vert\leq \Gamma.
 $$
Then it follows from \eqref{4.13} and \eqref{4.14} that
\begin{equation*}
\Vert y_k-T_{(A-\gamma B)_0}(l_{k}-l_{m})y_m \Vert \leq \Gamma \delta_\gamma(l_k-l_m)+\dfrac{\varepsilon}{2} M_\gamma e^{\omega_\gamma^+ (l_k-l_m)} (l_k-l_m)
\end{equation*}
for $m=0,\dots,k-1$.
To conclude the proof of (ii) we note that
$$
\begin{array}{llll}
\Vert y_k-T_{(A-\gamma B)_0}(l_{k}-l_{0})x_0 \Vert=\Vert y_k-T_{(A-\gamma B)_0}(l_{k})x_0 \Vert &\leq &  \Gamma \delta_\gamma(l_k)+M_\gamma e^{\omega_\gamma^+ l_k} l_k \\
\end{array}
$$
and
$$
\begin{array}{lll}
\Vert y_k \Vert &\leq & \Vert y_k-T_{(A-\gamma B)_0}(l_{k})x_0 \Vert+\Vert T_{(A-\gamma B)_0}(l_{k})x_0 \Vert \\
& \leq &  \Gamma \delta_\gamma(l_k)+M_\gamma e^{\omega_\gamma^+ l_k} l_k+\Vert T_{(A-\gamma B)_0}(l_{k})x_0\Vert.
\end{array}
$$
Since $l_k\in [0,\tau]$, the inequality \eqref{4.5}  implies that $y_k \in B(0,\rho)\cap C_0$.

\medskip
\noindent \textbf{Proof of (iii): } The proof follows the same lines in (ii).
\end{proof}
\begin{lemma} \label{LE4.8}
Let Assumptions \ref{ASS2.1}, \ref{ASS2.4},  \ref{ASS3.1} and \ref{ASS4.4}  be  satisfied. Then there exists an integer $n_\varepsilon\geq 1$ such that $l_{n_\varepsilon}=\tau$. That is to say that we have a finite number of knots $(l_k,y_k)$, $k=0,\dots,n_\varepsilon$ with
$$
0=l_0<l_1<\cdots<l_{n_\varepsilon-1}< l_{n_\varepsilon}=\tau \ \text{ and }\  y_0,y_1,\dots,y_{n_\varepsilon} \in C_0,\ y_0=x_0.
$$
\end{lemma}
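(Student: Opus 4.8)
The plan is to argue by contradiction, assuming the construction never reaches $\tau$, and to contradict the fact that the sub-tangential condition forces the step sizes $r_k$ to stay bounded away from $0$ near an accumulation point. Suppose $l_k<\tau$ for every $k\geq 0$. By \eqref{4.8} this forces $l_{k+1}-l_k=r_k/2>0$ for all $k$, so $(l_k)_k$ is strictly increasing and bounded above by $\tau$; hence it converges to some $l_\infty\in(0,\tau]$ and, in particular, $r_k=2(l_{k+1}-l_k)\to 0$. I will reach a contradiction by proving $\liminf_k r_k>0$.

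The crucial preliminary step is to show that $(y_k)_k$ converges. Fix $m$; by Lemma~\ref{LE4.7}(iii) together with the monotonicity of $\delta_\gamma$ and of $t\mapsto e^{\omega_\gamma^+ t}t$ one has, for every $k>m$,
\begin{equation*}
\Vert y_k-T_{(A-\gamma B)_0}(l_k-l_m)y_m\Vert\leq \beta_m:=\Gamma\,\delta_\gamma(l_\infty-l_m)+\frac{\varepsilon}{2}M_\gamma e^{\omega_\gamma^+\tau}(l_\infty-l_m),
\end{equation*}
and $\beta_m\to 0$ as $m\to\infty$, since $\delta_\gamma(t)\to 0$ as $t\to 0^+$ and $l_\infty-l_m\to 0$. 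For $k,k'>m$ I then split
\begin{equation*}
\Vert y_k-y_{k'}\Vert\leq 2\beta_m+\Vert T_{(A-\gamma B)_0}(l_k-l_m)y_m-T_{(A-\gamma B)_0}(l_{k'}-l_m)y_m\Vert .
\end{equation*}
The key point is that $y_m$ is \emph{fixed}, so the last term is controlled by the strong continuity of the single orbit $s\mapsto T_{(A-\gamma B)_0}(s)y_m$ at $s=l_\infty-l_m$; no uniform (norm) continuity of the semigroup over bounded sets is needed. Given $\sigma>0$, choosing first $m$ with $2\beta_m<\sigma/2$ and then $k,k'$ large enough makes the right-hand side $<\sigma$. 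Hence $(y_k)_k$ is Cauchy and converges to some $y_\infty$; since $C_0$ is closed, $y_\infty\in C_0$, and by Lemma~\ref{LE4.7}(ii), $\Vert y_\infty\Vert\leq\rho$, while $l_\infty\leq\tau<\rho$.

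Finally I would invoke the sub-tangential condition at the accumulation point $(l_\infty,y_\infty)$. Since $\Vert y_\infty\Vert\leq\rho$ and $l_\infty\in[0,\rho]$, \eqref{4.3} yields a value $\eta_0\in(0,\varepsilon^*)$ which, taken small enough, simultaneously satisfies
\begin{equation*}
\frac{1}{\eta_0}\,d\!\left(T_{(A-\gamma B)_0}(\eta_0)y_\infty+S_{A-\gamma B}(\eta_0)F_\gamma(l_\infty,y_\infty),C_0\right)<\frac{\varepsilon}{4},\qquad \Vert T_{(A-\gamma B)_0}(\eta_0)y_\infty-y_\infty\Vert<\frac{\varepsilon}{2},
\end{equation*}
together with $\eta_0$ lying inside the modulus-of-continuity radius of $F_\gamma$ near $(l_\infty,y_\infty)$. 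I then transfer these three estimates from $(l_\infty,y_\infty)$ to $(l_k,y_k)$ for $k$ large. The first condition in \eqref{4.7} follows from uniform continuity of $F_\gamma$ on a compact neighborhood; the third follows from $\Vert T_{(A-\gamma B)_0}(\eta_0)(y_k-y_\infty)\Vert\leq M_\gamma e^{\omega_\gamma^+\varepsilon^*}\Vert y_k-y_\infty\Vert\to 0$; and the second follows because $d(\cdot,C_0)$ is $1$-Lipschitz and, by Lemma~\ref{LE4.1} and \eqref{4.4} applied to $A-\gamma B$, $\Vert S_{A-\gamma B}(\eta_0)z\Vert\leq\delta_\gamma(\eta_0)\Vert z\Vert$, so that
\begin{equation*}
\frac{1}{\eta_0}\,d\!\left(T_{(A-\gamma B)_0}(\eta_0)y_k+S_{A-\gamma B}(\eta_0)F_\gamma(l_k,y_k),C_0\right)\leq \frac{\varepsilon}{4}+\frac{M_\gamma e^{\omega_\gamma^+\varepsilon^*}}{\eta_0}\Vert y_k-y_\infty\Vert+\frac{\delta_\gamma(\eta_0)}{\eta_0}\Vert F_\gamma(l_k,y_k)-F_\gamma(l_\infty,y_\infty)\Vert ,
\end{equation*}
whose right-hand side is $<\varepsilon/2$ once $k$ is large, the factor $\eta_0$ being fixed. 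Thus $\eta_0\in I_k$ for all large $k$, whence $r_k=\sup I_k\geq\eta_0>0$, contradicting $r_k\to 0$. Therefore the construction must stop, i.e. $l_{n_\varepsilon}=\tau$ for some finite $n_\varepsilon\geq 1$. I expect the main obstacle to be the convergence of $(y_k)_k$, which I resolve through the three-term splitting above precisely so as to avoid needing equicontinuity of the semigroup over bounded sets.
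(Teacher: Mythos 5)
Your proof is correct and follows essentially the same route as the paper's: contradiction via $r_k\to 0$, the three-term splitting with Lemma \ref{LE4.7}(iii) to show $(y_k)$ is Cauchy (using only strong continuity of the single orbit $s\mapsto T_{(A-\gamma B)_0}(s)y_m$), and the sub-tangential condition at the limit point transferred back to $(l_k,y_k)$ to show a fixed $\eta_0\in I_k$ for all large $k$. The only caveat is your appeal to ``uniform continuity of $F_\gamma$ on a compact neighborhood'' for the first condition in \eqref{4.7} (balls in $X_0$ need not be compact); this is harmless because, as you also indicate, pointwise continuity of $F_\gamma$ at $(l_\infty,y_\infty)$ plus a triangle inequality through that point --- exactly the paper's \eqref{4.18} and \eqref{4.21} --- is all that is needed.
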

\begin{proof}
We will use proof by contradiction. Assume that $l_k<\tau$ for all $k\geq 0$. That is to say that
$$
l_{k+1}=l_k+\dfrac{r_k}{2}, \ \forall k\geq 0.
$$
Since the sequence is strictly increasing, there exists $l^*\leq \tau$ such that $l_k\rightarrow l^*$ as $k\rightarrow +\infty$ and $l_k<l^*$ for each $k\geq 0$. This also implies that 
\begin{equation}\label{4.15}
\lim_{k\rightarrow +\infty} r_k=0.
\end{equation}
In order to contradict \eqref{4.15}, we will prove that there exists $k_0$ large enough and $\eta^*>0$  such that $\eta^* \in I_k$ for all $k\geq k_0$. This will mean that $r_k=\sup I_k \geq \eta^*>0$ for all $k\geq k_0$.

 Let us show that $\left\lbrace y_k \right\rbrace_{k\geq 0}$ is a Cauchy sequence. To this end, we let $m\geq 0$ be arbitrary and $k\geq j>m$ be given. Then from Lemma  \ref{LE4.7}, for all $k\geq j>m$, we have
$$
\begin{array}{llll}
\Vert y_k-y_j \Vert &\leq &  \Vert y_k-T_{(A-\gamma B)_0}(l_{k}-l_{m})y_m \Vert
\\
\\
&+& \Vert T_{(A-\gamma B)_0}(l_{k}-l_{m})y_m-T_{(A-\gamma B)_0}(l_{j}-l_{m})y_m\Vert\\
\\
&+ & \Vert T_{(A-\gamma B)_0}(l_{j}-l_{m})y_m-y_j \Vert  \\
\\
&\leq & \Gamma \delta_\gamma(l_k-l_m)+M_\gamma e^{\omega_\gamma^+ (l_k-l_m)} (l_k-l_m) \\
\\
&+&  \Vert T_{(A-\gamma B)_0}(l_{k}-l_{m})y_m-T_{(A-\gamma B)_0}(l_{j}-l_{m})y_m\Vert \\
\\
&+&  \Gamma \delta_\gamma(l_j-l_m)+M_\gamma e^{\omega_\gamma^+ (l_j-l_m)} (l_j-l_m).
\end{array}
$$
Then
$$
\limsup_{k,j\rightarrow +\infty} \Vert y_k-y_j \Vert\leq 2 \Gamma \delta_\gamma(l^*-l_m)+2 M_\gamma e^{\omega_\gamma^+ (l^*-l_m)} (l^*-l_m).
$$
Since $m$ is arbitrary and
$$
\lim_{m\rightarrow +\infty} [2 \Gamma \delta_\gamma(l^*-l_m)+2 M_\gamma e^{\omega_\gamma^+ (l^*-l_m)} (l^*-l_m)]=0,
$$
we deduce that $(y_k)_{k\geq 0}$ is a Cauchy sequence in $B(0,\rho)\cap C_0$. Therefore there exists $y^*\in B(0,\rho)\cap C_0$ such that
$$
\lim_{k\rightarrow +\infty} y_k=y^* \in C_0.
$$
Since $y^* \in C_0$ we have 
$$
\lim_{h\rightarrow 0^+}\dfrac{1}{h} d\left( T_{(A-\gamma B)_0}(h)y^*+S_{A-\gamma B}(h)F_\gamma(l^*,y^*),C_0\right)=0.
$$
By using the above limit, we can find $\eta^*\in (0,\dfrac{\varepsilon}{4}) $ small enough such that 
\begin{equation}\label{4.16}
0<\eta^*< \dfrac{\varepsilon}{4}<\varepsilon^*
\end{equation}
and 
\begin{equation}\label{4.17}
\dfrac{1}{\eta^*} d\left( T_{(A-\gamma B)_0}(\eta^*)y^*+S_{A-\gamma B}(\eta^*)F_\gamma(l^*,y^*),C_0\right) \leq \dfrac{\varepsilon}{4}
\end{equation}
and  (by using the continuity of $(l,y) \to T_{(A-\gamma B)_0}(l)y$)
\begin{equation} \label{4.17Bis}
\Vert T_{(A-\gamma B)_0}(\eta^*)y^*-y^*\Vert \leq \dfrac{\varepsilon}{2}
\end{equation}
and (by using the continuity of $(l,y) \to F_\gamma(l,y)$)
\begin{equation}\label{4.18}
\vert l^*-l \vert \leq 2\eta^* \ \text{ and } \Vert y-y^* \Vert \leq 2\eta^* \Rightarrow \Vert F_\gamma(l,y)-F_\gamma(l^*,y^*) \Vert \leq \dfrac{\varepsilon}{2} \\.
\end{equation}
To obtain a contradiction, we will use the 1-Lipschitz continuity of $x\in X\rightarrow d(x,C_0)$ combined with the continuity of $(l,y) \to F_\gamma(l,y)$ and $(l,y) \to T_{(A-\gamma B)_0}(l)y$ at $(l^*,y^*)$. Thus there exists $k_0\geq 0$ large enough such that for all $k\geq k_0$ one has
\begin{equation}\label{4.19}
\left\lbrace
\begin{array}{llll}
\Vert F_\gamma(l_k,y_k)-F_\gamma(l^*,y^*) \Vert \leq \dfrac{\varepsilon}{2}\\
\Vert T_{(A-\gamma B)_0}(\eta^*)y_k-T_{(A-\gamma B)_0}(\eta^*)y^*\Vert\leq \dfrac{\varepsilon}{4}\\
\Vert y_k-y^* \Vert\leq \eta^* \text{ and }0<\vert l^*-l_k \vert \leq \eta^*
\end{array}
\right.
\end{equation}
since $\eta^*$ is fixed and $ y_k\to y^* $ and $l_k \to l^*$. % for each $k$ large enough we have $\Vert y_k-y^* \Vert\leq \eta^*$ and $0<\vert l^*-l_k \vert \leq \eta^*$.

By using \eqref{4.17} and $ y_k\to y^* $ and $l_k \to l^*$, we obtain for each $k\geq k_0$ (taking possibly $k_0$ larger) 
\begin{equation}\label{4.20}
\dfrac{1}{\eta^*} d\left( T_{(A-\gamma B)_0}(\eta^*)y_k+S_{A-\gamma B}(\eta^*)F_\gamma(l_k,y_k),C_0\right) < \dfrac{\varepsilon}{2},\ \forall k\geq k_0.
\end{equation}
Next we note that for any $k\geq k_0$
$$
0\leq  l-l_k \leq \eta^* \Rightarrow\vert l-l^* \vert\leq \vert l-l_k \vert+\vert l^*-l_k\vert\leq 2\eta^*
$$
and
$$
\Vert y-y_k \Vert \leq \eta^* \Rightarrow\Vert y-y^* \Vert\leq \Vert y-y_k \Vert+\Vert y^*-y_k\Vert\leq 2\eta^*.
$$
{ Combining \eqref{4.17}-\eqref{4.18} with \eqref{4.19}, it follows that} for any $k\geq k_0$
\begin{equation}\label{4.21}
\Vert F_\gamma(l,y)-F_\gamma(l_k,y_k) \Vert\leq \Vert F_\gamma(l,y)-F_\gamma(l^*,y^*) \Vert+\Vert F_\gamma(l^*,y^*)-F_\gamma(l_k,y_k) \Vert\leq \varepsilon
\end{equation}
whenever
\begin{equation}\label{4.22}
\vert l-l_k \vert \leq \eta^* \ \text{ and }\  \Vert y-y_k \Vert \leq \eta^*.
\end{equation}
{In view of \eqref{4.16}, \eqref{4.17} and \eqref{4.19}, we further have}
\begin{equation}\label{4.23}
\begin{array}{lll}
\Vert T_{(A-\gamma B)_0}(\eta^*)y_k-y_k \Vert &\leq & \Vert T_{(A-\gamma B)_0}(\eta^*)y_k-T_{(A-\gamma B)_0}(\eta^*)y^*\Vert \vspace{0.2cm}\\
& & +\Vert T_{(A-\gamma B)_0}(\eta^*)y^*-y^* \Vert+\Vert y^*-y_k \Vert\leq \varepsilon.
\end{array}
\end{equation}
Finally it follows from \eqref{4.20}-\eqref{4.23} that $0<\eta^* \in I_k$ for all $k\geq k_0$ which contradicts \eqref{4.15}. 
%$$
%0=\lim_{k\rightarrow +\infty} r_k=\lim_{k\rightarrow +\infty} \sup(I_k).
%$$
\end{proof}
\medskip

\noindent \textbf{Construction of the approximate solution:}
Recall that from property (i) of Lemma \ref{LE4.7} we have for each $m=0,\dots,k-1$ and each $k\geq 1$
\begin{equation}\label{4.24}
\begin{array}{lll}
 y_k&=& \displaystyle T_{(A-\gamma B)_0}(l_{k}-l_{m})y_m+\sum_{i=m}^{k-1} (l_{i+1}-l_{i})T_{(A-\gamma B)_0}(l_{k}-l_{i+1})H_i\\
 & & \displaystyle+\sum_{i=m}^{k-1} T_{(A-\gamma B)_0}(l_{k}-l_{i+1})S_{A-\gamma B}(l_{i+1}-l_i)F_\gamma(l_i,y_i).
\end{array}
\end{equation}
For each $t\in [l_k,l_{k+1}]$ and each $k=0,\dots,{n_\varepsilon}-1$, we set
\begin{equation}\label{4.25}
\begin{array}{lll}
u_\varepsilon(t)&:=& \displaystyle T_{(A-\gamma B)_0}(t-l_{0})y_0+S_{A-\gamma B}(t-l_k)F_\gamma(l_k,y_k)+(t-l_k)H_k\\
& & + \displaystyle \sum_{i=0}^{k-1} (l_{i+1}-l_{i})T_{(A-\gamma B)_0}(t-l_{i+1})H_i\\
 & & \displaystyle+\sum_{i=0}^{k-1} T_{(A-\gamma B)_0}(t-l_{i+1})S_{A-\gamma B}(l_{i+1}-l_i)F_\gamma(l_i,y_i)
\end{array}
\end{equation}
with the convention
$$
\sum_{i=m}^{p}=0\ \text{ if } p<m.
$$
By using the semigroup property for $t \to  T_{(A-\gamma B)_0}(t)$, we deduce from \eqref{4.24} and \eqref{4.25} that
\begin{equation} \label{4.26}
u_\varepsilon(t)=T_{(A-\gamma B)_0}(t-l_k)y_k+S_{A-\gamma B}(t-l_k)F_\gamma(l_k,y_k)+(t-l_k)H_k,\ \forall t\in [l_k,l_{k+1}].
\end{equation}
Then it is clear that $u_\varepsilon(t)$ is well defined and continuous from $[0,\tau]$ into $X_0$ and
$$
u_\varepsilon(l_k)=y_k,\ \forall k=0,\dots,n_\varepsilon.
$$
{Next we rewrite $u_\varepsilon(t)$ into a form that will be convenient for our subsequent discussions}. By using the relationship
$$
S_{A-\gamma B}(h+\sigma)-S_{A-\gamma B}(\sigma)=T_{(A-\gamma B)_0}(\sigma)S_{A-\gamma B}(h),\ \forall h\geq 0,\ \forall \sigma\geq 0
$$
one can rewrite from \eqref{4.25} the formula of $u_\varepsilon$ as
$$
\begin{array}{lll}
u_\varepsilon(t)&=& \displaystyle T_{(A-\gamma B)_0}(t-l_0)y_0+S_{A-\gamma B}(t-l_k)F_\gamma(l_k,y_k)+(t-l_k)H_k\\
& & \displaystyle  +\sum_{i=0}^{k-1} (l_{i+1}-l_i)T_{(A-\gamma B)_0}(t-l_{i+1})H_i\\ 
& & \displaystyle +\sum_{i=0}^{k-1}[S_{A-\gamma B}(t-l_{i+1})-S_{A-\gamma B}(t-l_i)]F_\gamma(l_i,y_i), \forall t\in [l_k,l_{k+1}].
\end{array}
$$
{Setting}
\begin{equation}\label{4.27}
f_\gamma(t)=F_\gamma(l_i,y_i),\ \forall t\in [l_i,l_{i+1}),\ i=0,\dots,n_{\varepsilon}-1,\ f_\gamma(l_{n_\varepsilon})=F_\gamma(l_{n_\varepsilon-1},y_{n_\varepsilon-1})
\end{equation}
and remembering that $y_0=x_0$, by Lemma \ref{LE4.2} we obtain for each $t\in [l_k,l_{k+1}]$,
\begin{equation}\label{4.28}
\begin{array}{lll}
u_\varepsilon(t)&=&\displaystyle T_{(A-\gamma B)_0}(t-l_0)x_0+(S_{A-\gamma B} \diamond f_\gamma(l_0+\cdot))(t-l_0)\\
& & \displaystyle +(t-l_k)H_k+\sum_{i=0}^{k-1} (l_{i+1}-l_i)T_{(A-\gamma B)_0}(t-l_{i+1})H_i.
\end{array}
\end{equation}
Similar arguments also gives for any $t\in [l_k,l_{k+1}]$ and each integer $m \in [0, k]$
\begin{equation}\label{4.29}
\begin{array}{lll}
u_\varepsilon(t)&=&\displaystyle T_{(A-\gamma B)_0}(t-l_m)y_m+(S_{A-\gamma B} \diamond f_\gamma(l_m+\cdot))(t-l_m)\\
& & \displaystyle +(t-l_k)H_k+\sum_{i=m}^{k-1} (l_{i+1}-l_i)T_{(A-\gamma B)_0}(t-l_{i+1})H_i.
\end{array}
\end{equation}
By using again \eqref{4.2}, we also have the following estimate that for any $t\in [l_m,l_{k}]$ with $k\geq m$,
\begin{equation}\label{4.30}
\Vert (S_{A-\gamma B} \diamond f_\gamma(l_m+\cdot))(t-l_m) \Vert \leq  \Gamma\delta_\gamma (t-l_m) .
\end{equation}
\begin{lemma}\label{LE4.9}
Let Assumptions \ref{ASS2.1}, \ref{ASS2.4},  \ref{ASS3.1} and \ref{ASS4.4}  be satisfied. Then the approximate solution $u_\varepsilon(t)$ in \eqref{4.28} satisfies the following properties
\begin{itemize}
\item[{\rm (i)}] There exits a constant  $\hat{M}_0>0$ such that
$$
\Vert u_\varepsilon(t)-y_k \Vert \leq \hat{M}_0(\varepsilon+\delta_\gamma(\varepsilon)),\ \forall t\in [l_k,l_{k+1}]
$$
with $k=0,\dots,n_\varepsilon-1$.
\item[{\rm (ii)}] $u_\varepsilon(t) \in B(0,\rho),\ \forall t\in [0,\tau]$.
\item[{\rm (iii)}] There exists a {constant} $\hat{M}_1>0$  such that for all $t\in [0,\tau]$
\begin{equation}\label{4.31}
\left\Vert u_\varepsilon(t)-T_{(A-\gamma B)_0}(t)x_0-(S_{A-\gamma B} \diamond F_\gamma(\cdot,u_\varepsilon(\cdot))(t)\right\Vert\leq  \hat{M}_1(\varepsilon+\delta_\gamma(\varepsilon)).
\end{equation}
\end{itemize}
\end{lemma}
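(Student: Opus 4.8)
The plan is to prove the three assertions in the order stated, since (ii) rests on (i) and (iii) uses both. Throughout I would work on a single subinterval $[l_k,l_{k+1}]$, exploit the closed forms \eqref{4.26}, \eqref{4.28}, \eqref{4.29} already derived for $u_\varepsilon$, and use that $A-\gamma B$ satisfies Assumptions \ref{ASS2.1}--\ref{ASS2.4} (Theorem \ref{TH2.8}), so that the regulated-function estimate of Lemma \ref{LE4.3} is available for the perturbed operator.

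For (i) I would start from the local representation \eqref{4.26},
$$u_\varepsilon(t)-y_k=\big[T_{(A-\gamma B)_0}(t-l_k)y_k-y_k\big]+S_{A-\gamma B}(t-l_k)F_\gamma(l_k,y_k)+(t-l_k)H_k,$$
and bound the three summands separately. Since $0<t-l_k\le l_{k+1}-l_k=r_k/2<r_k=\sup I_k$, the third defining inequality of $I_k$ in \eqref{4.7} gives $\Vert T_{(A-\gamma B)_0}(t-l_k)y_k-y_k\Vert\le\varepsilon$. Writing the second summand as the $\diamond$-convolution of a single-step forcing (Lemma \ref{LE4.1}) and applying Lemma \ref{LE4.3} bounds it by $\Gamma\,\delta_\gamma(t-l_k)\le\Gamma\,\delta_\gamma(\varepsilon)$, using \eqref{4.2} and $t-l_k\le\varepsilon$. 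Finally \eqref{4.10} gives $(t-l_k)\Vert H_k\Vert\le\varepsilon^2/4\le\varepsilon/4$ (recall $\varepsilon<1$). Summing yields the claim with, e.g., $\hat M_0:=\max(5/4,\Gamma)$.

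For (ii) I would instead use the global representation \eqref{4.28} with $l_0=0$, $y_0=x_0$, which gives
$$\Vert u_\varepsilon(t)\Vert\le\Vert T_{(A-\gamma B)_0}(t)x_0\Vert+\Gamma\,\delta_\gamma(t)+\tfrac{\varepsilon}{2}\Big[(t-l_k)+M_\gamma e^{\omega_\gamma^+t}\,l_k\Big],$$
where the forcing term is controlled by \eqref{4.30} and the $H_i$-terms exactly as in \eqref{4.13}. Because $\varepsilon<1$, the bracket is dominated by $M_\gamma e^{\omega_\gamma^+t}\,t$, so the right-hand side is at most $\Gamma\,\delta_\gamma(t)+M_\gamma e^{\omega_\gamma^+t}t+\Vert T_{(A-\gamma B)_0}(t)x_0\Vert$, which is $\le\rho$ by the choice of $\tau$ in \eqref{4.5}. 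Hence $u_\varepsilon(t)\in B(0,\rho)$ for all $t\in[0,\tau]$.

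The real work, and where I expect the main obstacle, is (iii). Again from \eqref{4.28} with $l_0=0$ the quantity to estimate splits as
$$u_\varepsilon(t)-T_{(A-\gamma B)_0}(t)x_0-\big(S_{A-\gamma B}\diamond F_\gamma(\cdot,u_\varepsilon(\cdot))\big)(t)=\big(S_{A-\gamma B}\diamond[f_\gamma-F_\gamma(\cdot,u_\varepsilon(\cdot))]\big)(t)+R(t),$$
where $R(t)=(t-l_k)H_k+\sum_{i=0}^{k-1}(l_{i+1}-l_i)T_{(A-\gamma B)_0}(t-l_{i+1})H_i$ is the $H_i$-remainder, bounded by $C\varepsilon$ exactly as in (ii). By linearity of $f\mapsto(S_{A-\gamma B}\diamond f)$ and Lemma \ref{LE4.3} (valid since $f_\gamma$ is a step, hence regulated, function and $s\mapsto F_\gamma(s,u_\varepsilon(s))$ is continuous, so the convolution is well defined via Theorem \ref{TH2.7}),
$$\big\Vert\big(S_{A-\gamma B}\diamond[f_\gamma-F_\gamma(\cdot,u_\varepsilon(\cdot))]\big)(t)\big\Vert\le\delta_\gamma(t)\sup_{s\in[0,t]}\Vert f_\gamma(s)-F_\gamma(s,u_\varepsilon(s))\Vert.$$
The obstacle is that for $s\in[l_i,l_{i+1})$ the point $u_\varepsilon(s)$ need not lie in $C_0$, so the time-continuity encoded in $I_i$ cannot be applied to $u_\varepsilon(s)$ directly. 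I would bridge through the knot $y_i$: the first defining inequality of $I_i$ (applied on $C_0$ with $y=y_i$ and $|s-l_i|<r_i/2$) gives $\Vert F_\gamma(s,y_i)-F_\gamma(l_i,y_i)\Vert\le\varepsilon$, while the global Lipschitz bound \eqref{4.1} together with part (i) gives $\Vert F_\gamma(s,y_i)-F_\gamma(s,u_\varepsilon(s))\Vert\le\Lambda\Vert y_i-u_\varepsilon(s)\Vert\le\Lambda\hat M_0(\varepsilon+\delta_\gamma(\varepsilon))$, both points lying in $B(0,\rho)$. Since $f_\gamma(s)=F_\gamma(l_i,y_i)$, adding these yields $\sup_s\Vert f_\gamma(s)-F_\gamma(s,u_\varepsilon(s))\Vert\le\varepsilon+\Lambda\hat M_0(\varepsilon+\delta_\gamma(\varepsilon))$; multiplying by the fixed constant $\delta_\gamma(t)\le\delta_\gamma(\tau)$ and absorbing $R(t)$ gives the desired bound $\hat M_1(\varepsilon+\delta_\gamma(\varepsilon))$.
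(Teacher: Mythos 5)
Your proposal is correct and follows essentially the same route as the paper: the paper first establishes a single unified estimate for $\Vert u_\varepsilon(t)-\bar y\Vert$ (its \eqref{4.32}) and specializes it to get (i) and (ii), whereas you bound the terms of \eqref{4.26} and \eqref{4.28} directly, but the ingredients (the membership $t-l_k\in I_k$, the regulated-function estimate \eqref{4.4}, and the bound \eqref{4.5}) are identical. Your treatment of (iii) -- splitting off the $H_i$-remainder, applying Lemma \ref{LE4.3} to $f_\gamma-F_\gamma(\cdot,u_\varepsilon(\cdot))$, and bridging through the knot $y_i$ via the first condition in \eqref{4.7} plus the Lipschitz bound and part (i) -- is exactly the paper's argument.
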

\begin{proof}
{We first prove that, for each $t\in [l_m,l_p]$ with $p\geq m\geq 0$ and each $\bar{y}\in X_0$, we have}
\begin{equation}\label{4.32}
\Vert u_\varepsilon(t)-\bar{y}\Vert \leq \Vert T_{(A-\gamma B)_0}(t-l_m)y_m-\bar{y}\Vert +\Gamma \delta_\gamma(t-l_m)+\dfrac{\varepsilon}{2} M_\gamma(t-l_m) e^{\omega_\gamma^+(t-l_m)}.
\end{equation}
Let $p> m\geq 0$ be given. From \eqref{4.29} we have
\begin{equation*}
\begin{array}{lll}
u_\varepsilon(t)-\bar{y} &=&\displaystyle T_{(A-\gamma B)_0}(t-l_m)y_m-\bar{y}+(S_{A-\gamma B} \diamond f_\gamma(l_m+\cdot))(t-l_m)\\
& & \displaystyle +(t-l_k)H_k+\sum_{i=m}^{k-1} (l_{i+1}-l_i)T_{(A-\gamma B)_0}(t-l_{i+1})H_i,\ \forall t\in [l_k,l_{k+1}]
\end{array}
\end{equation*}
with $m\leq k\leq p-1$.
Hence
\begin{equation*}
\begin{array}{lll}
\Vert u_\varepsilon(t)-\bar{y} \Vert &\leq &\displaystyle \Vert T_{(A-\gamma B)_0}(t-l_m)y_m-\bar{y}\Vert +\Vert (S_{A-\gamma B} \diamond f_\gamma(l_m+\cdot))(t-l_m)\Vert \\
& & \displaystyle +(t-l_k)\Vert H_k\Vert +\sum_{i=m}^{k-1} (l_{i+1}-l_i)\Vert T_{(A-\gamma B)_0}(t-l_{i+1})H_i\Vert,\ \forall t\in [l_k,l_{k+1}]
\end{array}
\end{equation*}
with $m\leq k\leq p-1$. {In view of \eqref{4.30}, and}
$$
H_i \in X_0\ \text{ and } \Vert H_i \Vert\leq \dfrac{\varepsilon}{2},\  i=0,\dots,n_\varepsilon,
$$
{we see that, for any $t\in [l_k,l_{k+1}]$ with $m\leq k\leq p-1$,}
\begin{equation*}
\begin{array}{lll}
\Vert u_\varepsilon(t)-\bar{y} \Vert &\leq &\displaystyle \Vert T_{(A-\gamma B)_0}(t-l_m)y_m-\bar{y}\Vert +\Gamma \delta_\gamma(t-l_m) +(t-l_k) \dfrac{\varepsilon}{2} \\
& & \displaystyle +\sum_{i=m}^{k-1}M_\gamma e^{\omega_\gamma (t-l_{i+1})} \dfrac{\varepsilon}{2} (l_{i+1}-l_i)\\
&\leq &\displaystyle \Vert T_{(A-\gamma B)_0}(t-l_m)y_m-\bar{y}\Vert + \Gamma \delta_\gamma(t-l_m)+\dfrac{\varepsilon}{2} M_\gamma(t-l_m) e^{\omega_\gamma^+(t-l_m)}\\
&\leq &\displaystyle \Vert T_{(A-\gamma B)_0}(t-l_m)y_m-\bar{y}\Vert +\Gamma \delta_\gamma(t-l_m)+\dfrac{\varepsilon}{2} M_\gamma(t-l_m) e^{\omega_\gamma^+(t-l_m)},
\end{array}
\end{equation*}
which {proves} \eqref{4.32}.

\noindent \textbf{Proof of (i):} By using \eqref{4.32} with $m=k$, $p=k+1$ and $\bar{y}=y_k${, for each $t\in [l_k,l_{k+1}]$, it follows that}
$$
\Vert u_\varepsilon(t)-y_k \Vert \leq \Vert T_{(A-\gamma B)_0}(t-l_k)y_k-y_k\Vert +\Gamma \delta_\gamma(t-l_k)+\dfrac{\varepsilon}{2} M_\gamma(t-l_k) e^{\omega_\gamma^+(t-l_k)}.
$$
{Observing that}
$$
t\in [l_k,l_{k+1}] \Rightarrow t-l_k \leq l_{k+1}-l_k\leq \dfrac{r_k}{2}<r_k\leq \varepsilon  \Rightarrow t-l_k \in I_k
$$
where $I_k$ and $r_k$ are defined respectively in \eqref{4.7}  and \eqref{4.8}. {Then} we deduce that
$$
\Vert T_{(A-\gamma B)_0}(t-l_k)y_k-y_k\Vert\leq \varepsilon,\ \forall t\in [l_k,l_{k+1}]
$$
and
\begin{equation}\label{4.33}
\Vert u_\varepsilon(t)-y_k \Vert \leq \varepsilon +\Gamma \delta_\gamma(\varepsilon)+\dfrac{\varepsilon}{2} M_\gamma \varepsilon e^{\omega_\gamma^+\varepsilon },\ \forall t\in [l_k,l_{k+1}].
\end{equation}
This proves (i). \\
\noindent \textbf{Proof of (ii):} {In view of \eqref{4.32} with $m=0$, $p=n_\varepsilon$ and $\bar{y}=0$, and using the fact $l_0=0$ and $y_0=x_0$, we deduce that}
$$
\begin{array}{llll}
\Vert u_\varepsilon(t)\Vert \leq   \Vert T_{(A-\gamma B)_0}(t)x_0\Vert+\Gamma \delta_\gamma(t)+M_\gamma e^{\omega_\gamma^+ t}t,\ \forall t\in [0,\tau].
\end{array}
$$
{Then the fact $0\leq t \leq \tau$ together with the inequality \eqref{4.5} imply that}
$$
\Vert u_\varepsilon(t) \Vert\leq \rho, \ \forall t\in [0,\tau].
$$
\noindent  \textbf{Proof of (iii):} Let
\begin{equation*}
v _\varepsilon(t)=u_\varepsilon(t)-T_{(A-\gamma B)_0}(t)x_0-(S_{A-\gamma B} \diamond F_\gamma(\cdot,u_\varepsilon(\cdot))(t),\ \forall t\in [0,\tau].
\end{equation*}
{We further define}
$$
g_\gamma(t):=f_\gamma(t)- F_\gamma(t,u_\varepsilon(t)),\ \forall t\in [0,\tau]
$$
or equivalently 
\begin{equation}\label{4.34}
g_\gamma(t)=\left\lbrace\begin{array}{lll}
 F_\gamma(l_k,y_k)-F_\gamma(t,u_\varepsilon(t)) & \text{ if } &  t\in [l_k,l_{k+1}),\ k=0,\dots,n_\varepsilon-1\\
F_\gamma(l_{n_\varepsilon-1},y_{n_\varepsilon-1})-F_\gamma(l_{n_\varepsilon},y_{n_\varepsilon}) & \text{ if } & t=l_{n_\varepsilon}.
\end{array}
\right.
\end{equation}
where $f_\gamma$ is defined in \eqref{4.27} and $n_\varepsilon$ has been defined in Lemma \ref{LE4.8}. 

Then using \eqref{4.29} we get
$$
v_\varepsilon(t)=(S_{A-\gamma B} \diamond g_\gamma(\cdot))(t)+(t-l_k)H_k+\sum_{i=0}^{k-1} (l_{i+1}-l_i)T_{(A-\gamma B)_0}(t-l_{i+1})H_i,\ \forall t\in [l_k,l_{k+1}].
$$
Since $g_\gamma \in {\rm Reg}([0,\tau],X)$, it follows that
$$
\begin{array}{lll}
\Vert v_\varepsilon(t)\Vert &\leq & \displaystyle\delta_\gamma(t) \sup_{s \in [0,t]} \Vert g_\gamma(s) \Vert+\dfrac{\varepsilon}{2}M_\gamma(t-l_0) e^{\omega_\gamma^+t}\\
&\leq &\displaystyle \delta_\gamma(\tau) \sup_{s \in [0,t]} \Vert g_\gamma(s) \Vert+\dfrac{\varepsilon}{2}M_\gamma\tau  e^{\omega_\gamma^+\tau}.
\end{array}
$$
Therefore one can obtain \eqref{4.31} by estimating
$$
\sup_{s \in [0,t]} \Vert g_\gamma(s) \Vert,\ \forall t\in [0,\tau].
$$
In view of \eqref{4.34}, it follows that
$$
\Vert g_\gamma(t) \Vert\leq  \Vert   F_\gamma(l_k,y_k)-F_\gamma(t,y_k) \Vert+\Vert   F_\gamma(t,y_k)-F_\gamma(t,u_\varepsilon(t)) \Vert,\ t\in [l_k,l_{k+1}]
$$
with $k=0,\dots,n_\varepsilon$. {Observing that if $t\in [l_k,l_{k+1}]$, then}
$$
t-l_k\leq l_{k+1}-l_k\leq \dfrac{r_k}{2}< r_k\leq \rho  \Rightarrow t-l_k \in I_k \text{ and } t\in [0,\rho]
$$
where $I_k$ and $r_k$ are defined respectively in \eqref{4.7}  and \eqref{4.8}. {This observation together with the fact}
$$
u_\varepsilon(t) \in B(0,\rho), \ \forall t\in [0,\tau],
$$
{implying that}
$$
\Vert g_\gamma(t) \Vert \leq  \varepsilon+\Lambda \Vert y_k-u_\varepsilon(t) \Vert,\ \forall t\in [l_k,l_{k+1}].
$$
Finally we infer from \eqref{4.33}  that
$$
\Vert g_\gamma(t) \Vert \leq  \varepsilon+\Lambda [\varepsilon +\Gamma \delta_\gamma(\varepsilon)+\dfrac{\varepsilon}{2} M_\gamma \varepsilon e^{\omega_\gamma^+\varepsilon }],\ \forall t\in [l_k,l_{k+1}].
$$
The result follows.
\end{proof}\\

\noindent \textbf{Existence of solution in $C_0$:} At this stage, the approximated solution $t \to u_\varepsilon(t)$ only belongs to $C_0$ for $t=l_k$ (since $u(l_k)=y_k \in C_0$). In this last part of the proof, we take the limit when $\varepsilon \to 0$ and after proving that the limit exits (by using Cauchy sequences), we will prove that the limit solution takes his value in $C_0$.

We first prove that the approximated solution $(u_\varepsilon)_{\varepsilon \in (0,\varepsilon^*)}$ forms a Cauchy sequence in $C([0,\tau],X_0)$ and its limit is a solution of system  \eqref{1.1}. Indeed, by using property (iii) of Lemma \ref{LE4.9}, we have
$$
\Vert u_\varepsilon(t)-u_\sigma (t)\Vert\leq  \hat{M}_1 [\varepsilon+\delta_\gamma(\varepsilon)+\sigma+\delta_\gamma(\sigma)]+\delta_\gamma(t) \sup_{s\in [0,t]}\Vert F_\gamma(s,u_\varepsilon(s))-F_\gamma(s,u_\sigma(s)) \Vert.
$$
{Since}
$$
u_\varepsilon(t),  u_\sigma(t)\in B(0,\rho),\ \forall t\in [0,\tau],\ 0<\tau\leq \rho,
$$
we obtain
$$
\Vert u_\varepsilon(t)-u_\sigma (t)\Vert\leq  \hat{M}_1 [\varepsilon+\delta_\gamma(\varepsilon)+\sigma+\delta_\gamma(\sigma)]+\delta_\gamma(\tau) \Lambda \sup_{s\in [0,\tau]} \Vert  u_\varepsilon(s)-u_\sigma(s) \Vert,\ \forall t\in [0,\tau].
$$
{In view of \eqref{4.6}, we have $0<\delta_\gamma(\tau) \Lambda<1$, and hence,}
$$
\sup_{t\in [0,\tau]} \Vert u_\varepsilon(t)-u_\sigma (t)\Vert\leq \dfrac{\hat{M}_1}{1-\delta_\gamma(\tau) \Lambda}  [\varepsilon+\delta_\gamma(\varepsilon)+\sigma+\delta_\gamma(\sigma)].
$$
Therefore $(u_\varepsilon)_{\varepsilon \in (0,\varepsilon^*)} \in C([0,\tau],X_0)$ is a Cauchy sequence in $C([0,\tau],X_0)$ endowed with {the supremum norm. Then} there exists $u \in C([0,\tau],X_0)$  such that
$$
\lim_{\varepsilon \rightarrow 0^+} \sup_{t\in [0,\tau]} \Vert u_\varepsilon(t)-u(t) \Vert=0.
$$
{Letting $\varepsilon$ tend to zero in \eqref{4.31}, it} is straightforward that
$$
\begin{array}{lll}
u(t)&=& T_{(A-\gamma B)_0}(t)x_0+(S_{A-\gamma B} \diamond F_\gamma(\cdot,u(\cdot))(t)\\
&=&T_{A_0}(t)x_0+(S_{A} \diamond F(\cdot,u(\cdot))(t),\ \forall t\in [0,\tau],\ \forall t\in [0,\tau].
\end{array}
$$
{That} is to say that $u \in C([0,\tau],X_0)$ is a mild solution of \eqref{1.1} in $[0,\tau]$. Finally using property (i) of Lemma \ref{LE4.9}, {we see that}
$$
d(u_\varepsilon(t),C_0) \leq \hat{M}_0 (\varepsilon+\delta_\gamma(\varepsilon)),\ \forall t\in [0,\tau] \Rightarrow\lim_{\varepsilon\rightarrow 0^+} d(u_\varepsilon(t),C_0) =0,\ \forall t\in [0,\tau].
$$
{By the continuity of  $x\in X_0 \mapsto d(x,C_0)$, we further see that}
$$
d(u(t),C_0) =\lim_{\varepsilon\rightarrow 0^+} d(u_\varepsilon(t),C_0) ,\ \forall t\in [0,\tau] \Rightarrow u(t)\in C_0,\ \forall t\in [0,\tau].
$$
\section{Applications to age structured models}
We will consider a generalization of the one dimensional model presented in \cite{Thieme90a}. The model considered is the following
\begin{equation}\label{5.1}
\left\{ 
\begin{array}{lll}
\displaystyle \dfrac{\partial u(t,a)}{\partial t}+\dfrac{\partial u(t,a)}{\partial a}=-\mu(a) u(t,a)\left(\kappa-\Theta(u(t,a))\right) \\
\displaystyle  u(t,0)=\int_0^{+\infty} \beta(a) u(t,a) \left(\kappa-\Theta(u(t,a))\right) da \\
u(0,.)=u_0\in \Li^p_+(\mathbb{R}_+,\mathbb{R}^n),\ p\in [1,+\infty)
\end{array}
\right.
\end{equation}
where we have set
\begin{equation*}
\Theta(x)=\sum_{k=0}^n x_k,\ \forall x \in \mathbb{R}^n
\end{equation*}
and assume that $\kappa>0$,  $\beta,\mu \in  \Li^{\infty}_+(\mathbb{R}_+,\mathbb{R})$ with $\frac{1}{p}+\frac{1}{q}=1$ and
$$
\beta(a)=0,\ \forall a \geq a_\dagger \ \text{ and } \ \mu(a)\geq \mu_->0,\ \forall a \geq 0.
$$
It is important to note that the model \eqref{5.1} is not well defined in  $\Li^p(\mathbb{R}_+,\mathbb{R}^n)$ however it does in a proper subset of $\Li^p(\mathbb{R}_+,\mathbb{R}^n)$ namely 
\begin{equation} \label{5.2}
C=\left \{ \varphi \in  \Li^p_+(\mathbb{R}_+,\mathbb{R}^n) : 0\leq \Theta(\varphi(a)) \leq \kappa \text{ for a.e.} \ a\geq 0 \right \}.
\end{equation}
\noindent \textbf{Truncated system:} The interest of our result is that we will be able to demonstrate the existence of solutions for initial data in $C$. To do so we introduce the following truncation function $\chi : \mathbb{R}\rightarrow [0,\kappa]$ defined by 
$$
\chi(s)=\min(k,s^+),\ \forall s \in \mathbb{R}
$$
and we set for each $i=1,\dots,n$
\begin{equation}\label{5.3}
\left\{ 
\begin{array}{lll}
\displaystyle \dfrac{\partial u_i(t,a)}{\partial t}+\dfrac{\partial u_i(t,a)}{\partial a}=-\mu(a) \chi(u_i(t,a))\chi\left(\kappa- \Theta(u(t,a))\right) \\
\displaystyle  u_i(t,0)=\int_0^{+\infty} \beta(a) \chi(u_i(t,a))\chi\left(\kappa- \Theta(u(t,a))\right) da \\
u_i(0,.)=u_{i0}\in \Li^p_+(\mathbb{R}_+,\mathbb{R}),\ p\in [1,+\infty)
\end{array}
\right.
\end{equation}
which is well defined in $\Li^p_+(\mathbb{R}_+,\mathbb{R}^n)$. The idea is to prove that for each $\varphi \in C$ there exists a unique mild solution of \eqref{5.3} lying in $C$ and since the two systems coincide in $C$ the result follows.\\
\textbf{Abstract reformulation: } Set
$$
X=\mathbb{R}^n \times \Li^p(\mathbb{R}_+,\mathbb{R}^n)
$$
endowed with the usual product norm. Consider the linear operator $A:D(A)\subset X \to X$
$$
A
\left(
\begin{array}{c}
0_{\mathbb{R}^n}\\
\varphi
\end{array}
\right)
=
\left(
\begin{array}{c}
-\varphi(0)\\
-\varphi^\prime
\end{array}
\right)
$$
and
$$
D(A)=\left\{ 0_{\mathbb{R}^n} \right\} \times \W^{1,p}(\mathbb{R}_+, \mathbb{R}^n)
$$
and note that the closure of the domain of $A$ is
$$
X_0:=\overline{D(A)}=\left\{ 0_{\mathbb{R}^n} \right\} \times \Li^p(\mathbb{R}_+,\mathbb{R}^n).
$$
Consider the non linear maps $F_0 :\Li^p(\mathbb{R}_+,\mathbb{R}^n) \rightarrow \mathbb{R}^n$ and  $F_1 :\Li^p(\mathbb{R}_+,\mathbb{R}^n)\to \Li^p(\mathbb{R}_+,\mathbb{R}^n)$  defined respectively for each $i=1,\dots,n$ by
$$
F_0(\varphi)_i= \int_0^{+\infty} \beta(a) \chi(\varphi_i(a)) \chi\left(\kappa-\Theta(\varphi(a))\right) da,\ \text{ for a.e } a\geq 0
$$
and 
$$
F_1(\varphi)_i(a)=-\mu(a)\chi(\varphi_i(a)) \chi\left(\kappa-\Theta(\varphi(a))\right),\ \text{ for a.e } a\geq 0.
$$
Next we consider $F : X_0 \rightarrow X$ defined by 
$$
F
\left(
\begin{array}{c}
0_{\mathbb{R}^n}\\
\varphi
\end{array}
\right)
=\left(
\begin{array}{c}
F_0(\varphi)\\
F_1(\varphi)
\end{array}
\right).
$$
By identifying $u(t,.)$ with $v(t):=
\left(
\begin{array}{c}
0_{\mathbb{R}^n}\\
u(t,.)
\end{array}
\right)$ we can rewrite the partial differential equation \eqref{5.1} as the following abstract Cauchy problem
\begin{equation*}
v^\prime(t)=Av(t)+F(v(t)), \text{ for } t \geq 0, \ v(0)=\left(
\begin{array}{c}
0_{\mathbb{R}^n}\\
u_0
\end{array}
\right) \in X_0.
\end{equation*}
It is well known that the linear operator $A : D(A)\subset X_0\rightarrow X_0$ is not Hille-Yosida for $p>1$ but fulfill the conditions of Assumption \ref{ASS2.1} (see \cite[Section 6]{Magal-Ruan07}). By using similar arguments in \cite{Magal-Ruan07} one can also show that Assumption \ref{ASS2.4} is satisfied. It can be easily checked that $F$ is Lipschitz on bounded sets of $X_0$. Therefore in what follow we will only verify that Assumption \ref{ASS4.4} is satisfied.  We consider the following closed subset as a candidate for the application of our results 
\begin{equation} \label{5.4}
\mathcal{C}_0=\{ 0_{\mathbb{R}^n}\} \times C.
\end{equation}
In order to verify Assumption \ref{ASS4.4} we will first determine the strongly continuous semigroup $\{T_{A_0}(t) \}_{t\geq 0} \subset \mathcal{L}(X_0)$  generated by $A_0$ the part of $A$ in $X_0$ and the integrated semigroup  $\{S_{A}(t) \}_{t\geq 0} \subset \mathcal{L}(X)$ generated by $A$. Indeed $\{T_{A_0}(t) \}_{t\geq 0} \subset \mathcal{L}(X_0)$ is given by 
\begin{equation*}
T_{A_0}(t)\left(
\begin{array}{c}
0_{\mathbb{R}^n}\\
\varphi
\end{array}
\right)=\left(
\begin{array}{c}
0_{\mathbb{R}^n}\\
\widehat{T}_{A_0}(t)(\varphi)
\end{array}
\right),\ \forall \left( \begin{array}{cc}
0_{\mathbb{R}^n}\\
\varphi 
\end{array}
\right) \in X_0
\end{equation*}
with $t\rightarrow  \widehat{T}_{A_0}(t)(\varphi)$ the unique continuous mild solution of the partial differential equation 
\begin{equation*}
\left\{ 
\begin{array}{lll}
\dfrac{\partial u(t,a)}{\partial t}+\dfrac{\partial u(t,a)}{\partial a}=0,\ t>0,\ a>0 \\
\displaystyle  u(t,0)=0,\ t>0 \\
u(0,.)=\varphi \in \Li^p(\mathbb{R}_+,\mathbb{R}^n).
\end{array}
\right.
\end{equation*}
Thus integrating along the characteristics yields
\begin{equation}\label{5.5.0}
\widehat{T}_{A_0}(t)(\varphi)(a)=
\left\lbrace
\begin{array}{l}
\varphi (a-t), \text{ if } a>t,\\
0, \text{ if } a<t,\\
\end{array}
\right.
\end{equation}
which can be rewritten into the more condensed form 
\begin{equation}\label{5.5}
\widehat{T}_{A_0}(t)(\varphi)(a)=H(a-t)\varphi(a-t),\ \forall t\geq 0,\ \text{ for a.e. } a \geq 0
\end{equation}
where the map $\varphi(a)$ is understood as its extension by $0$ for almost every $a < 0$ and $a\rightarrow H(a)$ is the Heaviside function defined by 
$$
H(a)=1 \ \text{ if } a \geq 0 \ \text{ and } H(a)=0,\ \text{ if } a<0.
$$
Furthermore the integrated semigroup generated by $A$ is given by 
\begin{equation*}
S_{A}(t)\left(
\begin{array}{c}
x\\
\varphi
\end{array}
\right)=\left(
\begin{array}{c}
0_{\mathbb{R}^n}\\
\widehat{S}_{A}(t)(x,\varphi)
\end{array}
\right),\ \forall \left( \begin{array}{cc}
x\\
\varphi 
\end{array}
\right) \in X
\end{equation*}
with $t\rightarrow \widehat{S}_A(t)(x,\varphi)$ the unique mild solution of the partial differential equation 
\begin{equation*}
\left\{ 
\begin{array}{lll}
\dfrac{\partial u(t,a)}{\partial t}+\dfrac{\partial u(t,a)}{\partial a}=\varphi(a),\ t>0,\ a>0, \\
\displaystyle  u(t,0)=x,\ t>0, \\
u(0,.)=0_{\Li^p},
\end{array}
\right.
\end{equation*}
which is obtained by integrating along the characteristics as follow
\begin{equation}
\widehat{S}_A(t)(x,0)(a)=
\left\lbrace
\begin{array}{l}
x, \text{ if } t>a,\\
0, \text{ if } t<a,
\end{array} 
\right.
\end{equation}
and 
\begin{equation*}
\widehat{S}_A(t)(0,\varphi)(a)=\left( \int_0^t \widehat{T}_{A_0}(l)\left(\varphi\right) dl \right) \left(a \right)
\end{equation*}
therefore (since by linearity $\widehat{S}_A(t)(x,\varphi)=\widehat{S}_A(t)(x,0)+\widehat{S}_A(t)(0,\varphi)$ we obtain 
\begin{equation}\label{5.6}
\widehat{S}_A(t)(x,\varphi)(a)=(1- H(a-t))x+\left( \int_0^t \widehat{T}_{A_0}(l)\left(\varphi\right) dl \right) \left(a \right),\ \forall t\geq 0,\ \text{ for a.e. } a \geq 0.
\end{equation}
The following lemma will allows us to give a more explicit form of \eqref{5.6}.
\begin{lemma}\label{LE5.1.0}
For each $t\geq 0$ we have
$$
\widehat{S}_A(t)(0,\varphi)(a)=\int_0^t \widehat{T}_{A_0}(l)\left(\varphi\right)(a)dl=\int_0^t H(a-l) \varphi(a-l) dl,\ \text{ for a.e. } a\geq 0
$$
where $H$ is the Heaviside function. Moreover we have 

\begin{equation}\label{5.9.0}
\widehat{S}_A(t)(x,\varphi)(a)=(1- H(a-t))x+\int_0^t H(a-l) \varphi(a-l) dl,\ \forall t\geq 0,\ \text{ a.e. } a \geq 0.
\end{equation}
\end{lemma}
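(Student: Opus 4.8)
The plan is to start from the representation \eqref{5.6} already derived by integrating along the characteristics, namely
$$\widehat{S}_A(t)(x,\varphi)(a)=(1-H(a-t))x+\left(\int_0^t \widehat{T}_{A_0}(l)(\varphi)\,dl\right)(a),$$
where the integral is a Bochner integral valued in $\Li^p(\mathbb{R}_+,\mathbb{R}^n)$, and to reduce everything to the single statement that pointwise evaluation commutes with this integral, i.e.
$$\left(\int_0^t \widehat{T}_{A_0}(l)(\varphi)\,dl\right)(a)=\int_0^t \widehat{T}_{A_0}(l)(\varphi)(a)\,dl \quad \text{for a.e. } a\geq 0.$$
Once this interchange is in hand, formula \eqref{5.5} gives $\widehat{T}_{A_0}(l)(\varphi)(a)=H(a-l)\varphi(a-l)$, which yields the first displayed identity of the lemma, and substituting it back into \eqref{5.6} gives \eqref{5.9.0}. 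So the whole content is one interchange-of-integration argument.

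First I would check that the candidate right-hand side is a genuine element of $\Li^p$. Each $\widehat{T}_{A_0}(l)$ is a contraction on $\Li^p$ (it is right translation with zero-fill, so $\|\widehat{T}_{A_0}(l)\varphi\|_p\le \|\varphi\|_p$), and $l\mapsto \widehat{T}_{A_0}(l)(\varphi)$ is continuous from $[0,t]$ into $\Li^p$ since $\{T_{A_0}(l)\}_{l\ge 0}$ is a $C_0$-semigroup; hence the Bochner integral is well defined. By Minkowski's integral inequality the map $a\mapsto \int_0^t H(a-l)\varphi(a-l)\,dl$ then has $\Li^p$-norm bounded by $\int_0^t \|\widehat{T}_{A_0}(l)(\varphi)\|_p\,dl\le t\|\varphi\|_p<+\infty$, so it too lies in $\Li^p$.

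The main step is the interchange itself, which I would establish weakly. Fix any $\psi\in \Li^q(\mathbb{R}_+,\mathbb{R}^n)$, where $1/p+1/q=1$. Since bounded linear functionals commute with the Bochner integral,
$$\left\langle \int_0^t \widehat{T}_{A_0}(l)(\varphi)\,dl,\psi\right\rangle=\int_0^t\int_{\mathbb{R}_+} H(a-l)\,\varphi(a-l)\cdot\psi(a)\,da\,dl.$$
This double integral is absolutely convergent: by Hölder's inequality the inner integral in $a$ is bounded by $\|\varphi\|_p\|\psi\|_q$ uniformly in $l$, so the iterated integral over the bounded interval $[0,t]$ is finite. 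Fubini's theorem then permits swapping the order of integration, giving
$$\left\langle \int_0^t \widehat{T}_{A_0}(l)(\varphi)\,dl,\psi\right\rangle=\int_{\mathbb{R}_+}\left(\int_0^t H(a-l)\,\varphi(a-l)\,dl\right)\cdot\psi(a)\,da.$$
As $\psi\in \Li^q$ is arbitrary and $\Li^q$ is the dual of $\Li^p$ (and therefore separates points), the two $\Li^p$-functions coincide almost everywhere, which is precisely the desired interchange; feeding it through \eqref{5.6} completes the proof.

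The only delicate point is the Fubini step, where one must confirm the joint measurability of $(l,a)\mapsto H(a-l)\varphi(a-l)\cdot\psi(a)$ together with the absolute integrability just noted; both are routine here, since $\varphi$ (extended by $0$ to negative arguments) and $\psi$ are measurable and the time interval $[0,t]$ is bounded. I expect no further obstacle.
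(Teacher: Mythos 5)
Your proof is correct and follows essentially the same route as the paper: both reduce the lemma to interchanging pointwise evaluation with the Bochner integral, and both establish that interchange by pairing against an arbitrary $\psi\in \Li^q$, commuting the functional with the Bochner integral, applying Fubini, and concluding via duality (the paper phrases the last step through Riesz representation and Hahn--Banach, you through the fact that $\Li^q$ separates points of $\Li^p$, which is the same thing). The extra checks you include (Minkowski bound, joint measurability) are harmless additions to the paper's argument.
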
 
\begin{proof}
Let $x^* : L^p(\mathbb{R}_+,\mathbb{R}^n)\rightarrow \mathbb{R}$ any linear continuous functional. Then by the Riesz representation theorem there exists a unique $\psi \in  L^q(\mathbb{R}_+,\mathbb{R}^n)$ with $\frac{1}{p}+\frac{1}{q}=1$ such that 
$$
x^*(\phi)=\int_0^{+\infty}\psi(a) \phi(a)da,\ \forall \phi \in L^p(\mathbb{R}_+,\mathbb{R}^n).
$$
Therefore we have by using Fubini's theorem for each $t\geq 0$
$$
\begin{array}{llll}
x^*\left( \widehat{S}_A(t)(0,\varphi) \right)&=&\displaystyle  x^* \left(\int_0^t \widehat{T}_{A_0}(l)\left(\varphi\right) dl\right) \\
&=& \displaystyle \int_0^t  x^*\left(\widehat{T}_{A_0}(l)\left(\varphi\right)\right) dl \\
&=& \displaystyle  \int_0^t \int_0^{+\infty} \psi(a)\widehat{T}_{A_0}(l)(\varphi)(a)da dl\\
&=& \displaystyle  \int_0^t \int_l^{+\infty} \psi(a)\varphi(a-l)da dl\\
&=& \displaystyle  \int_0^t \int_0^{a} \psi(a)\varphi(a-l)dl da+ \int_t^\infty \int_0^{t} \psi(a)\varphi(a-l)dl da  \\
&=& \displaystyle  \int_0^\infty \int_0^{\min(a,t)} \psi(a)\varphi(a-l)dl da  \\
&=& \displaystyle  \int_0^\infty \psi(a) \int_0^{t} H(a-l)\varphi(a-l)dl da. \\
\end{array}
$$
Since $x^*$ is arbitrary, by using the Hahn-Banach theorem we deduce that 
$$
 \widehat{S}_A(t)(0,\varphi)(a)=\int_0^t \widehat{T}_{A_0}(l)(\varphi)(a) dl,\ \forall t\geq 0,\ \text{ for a.e. } a \geq 0
$$
and the result follows by using \eqref{5.5.0}.
\end{proof}\\

Hence using \eqref{5.5} and \eqref{5.6} we have for each $\left(
\begin{array}{c}
0_{\mathbb{R}^n}\\
\varphi
\end{array}
\right)\in \mathcal{C}_0$ 
\begin{equation*}
T_{A_0}(h)\left(
\begin{array}{c}
0_{\mathbb{R}^n}\\
\varphi
\end{array}
\right)+S_{A}(h)F\left(
\begin{array}{c}
0_{\mathbb{R}^n}\\
\varphi
\end{array}
\right)=\left(
\begin{array}{c}
0_{\mathbb{R}^n}\\
\widehat{v}(\varphi;h)
\end{array}
\right),\ \forall h\geq 0
\end{equation*}
with 
$$
\widehat{v}(\varphi;h)=\widehat{T}_{A_0}(h)(\varphi)+\widehat{S}_A(h)(F_0(\varphi),F_1(\varphi)),\ \forall h\geq 0.
$$
More precisely by using \eqref{5.5} and  \eqref{5.9.0} we have
\begin{equation}\label{5.7}
\widehat{v}(\varphi;h)(a)=H(a-h)\varphi(a-h)+[1-H(a-h)]F_0(\varphi)+\int_0^h H(a-l) F_1(\varphi)(a-l) dl,\ \forall h\geq 0,\ \forall a \geq 0
\end{equation}
or equivalently 
\begin{equation}\label{5.8}
\widehat{v}(\varphi;h)(a)=\widehat{v}_1(\varphi;h)(a)+\widehat{v}_2(\varphi;h)(a),\ \forall h\geq 0,\ \forall a \geq 0
\end{equation}
with 
\begin{equation}\label{5.9}
\left\lbrace
\begin{array}{lll}
\widehat{v}_1(\varphi;h)(a)=&H(a-h)\varphi(a-h)+[1-H(a-h)]F_0(\varphi)\\
\vspace{-0.3cm} \\
&+h H(a-h)F_1(\varphi)(a-h)\\
\vspace{-0.1cm} \\
\displaystyle \widehat{v}_2(\varphi;h)(a)=& h [F_1(\varphi)(a)-H(a-h)F_1(\varphi)(a-h)]\\
\vspace{-0.3cm} \\
&+\int_0^h  [H(a-l)F_1(\varphi)(a-l)-F_1(\varphi)(a)]dl.
\end{array}
\right.
\end{equation}
\begin{lemma}\label{LE5.1}
For each $\varphi \in C$ we have 
$$
\lim_{h\rightarrow 0^+}\dfrac{1}{h} \Vert \widehat{v}_2(\varphi;h) \Vert_{L^p}=0.
$$
\end{lemma}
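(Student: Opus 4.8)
The plan is to recognize $\widehat v_2(\varphi;h)$ as a combination of time-shifts of the single fixed function $g:=F_1(\varphi)$ and to reduce the whole statement to the strong continuity at $0$ of the nilpotent shift semigroup $\{\widehat T_{A_0}(l)\}_{l\geq0}$ on $L^p$. First I would set $g:=F_1(\varphi)$ and observe that $g\in L^p(\mathbb{R}_+,\mathbb{R}^n)$: since $\chi$ is bounded by $\kappa$ and $0\leq\chi(\varphi_i(a))\leq|\varphi_i(a)|$, each component obeys $|g_i(a)|\leq \kappa\,\|\mu\|_{L^\infty}\,|\varphi_i(a)|$, so $g$ is dominated by a multiple of $\varphi\in L^p$. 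Using the explicit formula \eqref{5.5}, namely $\widehat T_{A_0}(l)(g)(a)=H(a-l)g(a-l)$, I would rewrite \eqref{5.9} after dividing by $h$ as
\[
\frac{1}{h}\,\widehat v_2(\varphi;h)=\bigl(g-\widehat T_{A_0}(h)g\bigr)+\frac{1}{h}\int_0^h\bigl(\widehat T_{A_0}(l)g-g\bigr)\,dl.
\]

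Next I would estimate the $L^p$ norm by the triangle inequality together with Minkowski's integral inequality, which lets me pull the norm inside the $dl$-integral, obtaining
\[
\frac{1}{h}\,\bigl\|\widehat v_2(\varphi;h)\bigr\|_{L^p}\leq \bigl\|g-\widehat T_{A_0}(h)g\bigr\|_{L^p}+\frac{1}{h}\int_0^h\bigl\|\widehat T_{A_0}(l)g-g\bigr\|_{L^p}\,dl\leq 2\sup_{l\in[0,h]}\bigl\|\widehat T_{A_0}(l)g-g\bigr\|_{L^p}.
\]
Finally, since $\{\widehat T_{A_0}(l)\}_{l\geq0}$ is the strongly continuous translation semigroup on $L^p$ (its strong continuity at $0$ is exactly the $L^p$-continuity of translations), the quantity $\|\widehat T_{A_0}(l)g-g\|_{L^p}$ tends to $0$ as $l\to0^+$; hence the supremum over $[0,h]$ tends to $0$ as $h\to0^+$, which gives the claim.

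There is essentially no serious obstacle here: the content of the lemma is that $\widehat v_2$ is a second-order remainder, and the argument is purely the $L^p$-modulus of continuity of $g$ under translation. The only delicate point is that strong continuity of translation is used, which holds precisely because $p<\infty$ (it fails for $p=\infty$); this is consistent with, and explains the role of, the standing hypothesis $p\in[1,+\infty)$. If I wanted to be fully self-contained I would recall the density of, say, $C_c(\mathbb{R}_+,\mathbb{R}^n)$ in $L^p$ and an $\varepsilon/3$ argument to justify $\|\widehat T_{A_0}(l)g-g\|_{L^p}\to0$, but since $\{T_{A_0}(t)\}_{t\geq0}$ has already been established as a $C_0$-semigroup on $X_0=\{0_{\mathbb{R}^n}\}\times L^p$, I may simply invoke its strong continuity directly.
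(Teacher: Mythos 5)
Your proof is correct, and it rests on the same underlying fact as the paper's: both terms of $\widehat v_2(\varphi;h)/h$ are controlled by the modulus of continuity of the translation (semi)group acting on the fixed function $g=F_1(\varphi)\in L^p$, which tends to $0$ because translations are strongly continuous on $L^p$ for $p<\infty$. The one genuine difference is in how the integral term is estimated. The paper bounds $\bigl(\int_0^h\Vert H(a-l)g(a-l)-g(a)\Vert\,dl\bigr)^p$ by H\"older's inequality with exponent $q$, applies Fubini, and substitutes $l\mapsto lh$ to arrive at $h\bigl(\int_0^1\Vert \widehat T_{A_0}(lh)g-g\Vert_{L^p}^p\,dl\bigr)^{1/p}$; this forces the authors to treat $p=1$ as a separate (easy) case. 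You instead apply Minkowski's integral inequality directly, which yields the cleaner bound $2\sup_{l\in[0,h]}\Vert \widehat T_{A_0}(l)g-g\Vert_{L^p}$ and works uniformly for all $p\in[1,+\infty)$ with no case split and no exponent bookkeeping. Your preliminary observation that $g\in L^p$ (via $|\chi(s)|\leq |s|$ and the boundedness of $\mu$ and $\chi$) is also the right thing to record, since the translation-continuity argument is applied to $g$ rather than to $\varphi$ itself; the paper leaves this implicit.
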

\begin{proof} We will give the proof for $1<p<+\infty$. The case $p=1$ can be obtained easily.  Let $q\in (1,+\infty)$ be given such that $\frac{1}{p}+\frac{1}{q}=1$. We have for each $h>0$
$$
\begin{array}{lllll}
\Vert \widehat{v}_2(\varphi;h) \Vert_{L^p}&\leq & h\Vert H(.) F_1(\varphi)(.)-H(.-h)F_1(\varphi)(.-h) \Vert_{L^p} \\
& & \displaystyle +\left(\int_0^{+\infty}\left(\int_0^h \Vert H(a-l)F_1(\varphi)(a-l)-F_1(\varphi)(a)\Vert dl \right)^p da\right)^{\frac{1}{p}}\\
&\leq & h \Vert H(.) F_1(\varphi)(.)-H(.-h)F_1(\varphi)(.-h) \Vert_{L^p} \\
& & \displaystyle +h^\frac{1}{q}\left(\int_0^{+\infty} \int_0^h  \Vert H(a-l)F_1(\varphi)(a-l)-F_1(\varphi)(a)\Vert^p dl da \right)^{\frac{1}{p}}\\
&\leq & h \Vert H(.) F_1(\varphi)(.)-H(.-h)F_1(\varphi)(.-h) \Vert_{L^p} \\
& & \displaystyle +h^\frac{1}{q}\left( \int_0^h \Vert H(.-l)F_1(\varphi)(.-l) -H(.) F_1(\varphi)(.)\Vert_{L^p}^p dl \right)^{\frac{1}{p}}\\
&\leq & h \Vert H(.) F_1(\varphi)(.)-H(.-h)F_1(\varphi)(.-h) \Vert_{L^p} \\
& & \displaystyle +h\left( \int_0^1 \Vert H(.-lh)F_1(\varphi)(.-lh) -H(.) F_1(\varphi)(.)\Vert_{L^p}^p dl \right)^{\frac{1}{p}}
\end{array}
$$
and the result follows by using the continuity of the translation in $L^p$.
\end{proof}\\

\noindent Note that since the $d(.;\mathcal{C}_0)$ is $1$-Lipschitz continuous we have 
$$
0\leq\dfrac{1}{h} d\left(\left(\begin{array}{c}
0_{\mathbb{R}^n}\\
\widehat{v}(\varphi;h)
\end{array} \right);\mathcal{C}_0\right)\leq \dfrac{1}{h}  d\left(\left(\begin{array}{c}
0_{\mathbb{R}^n}\\
\widehat{v}_1(\varphi;h)
\end{array} \right);\mathcal{C}_0\right)+\dfrac{1}{h}\Vert \widehat{v}_2(\varphi;h) \Vert_{L^p},\ \forall h>0
$$
it now follows from Lemma \ref{LE5.1} that Assumption \ref{ASS4.4} is satisfied if 
\begin{equation}\label{5.10}
\lim_{h\rightarrow 0^+}\dfrac{1}{h}  d\left(\left(\begin{array}{c}
0_{\mathbb{R}^n}\\
\widehat{v}_1(\varphi;h)
\end{array} \right);\mathcal{C}_0\right)=0.
\end{equation}
In order to prove \eqref{5.10} we will show that under some conditions to be make precise later $\left(\begin{array}{c}
0_{\mathbb{R}^n}\\
\widehat{v}_1(\varphi;h)
\end{array} \right)$ belongs to $\mathcal{C}_0$ for $h>0$ sufficiently small. To this end note that 
\begin{equation}\label{5.11}
\widehat{v}_1(\varphi;h)(a)=\left\lbrace
\begin{array}{llll}
\varphi(a-h)+h F_1(\varphi)(a-h) & \text{ if } & a\geq h \\
F_0(\varphi) & \text{ if } & a< h.
\end{array}
\right.
\end{equation}
\begin{lemma} \label{LE5.2}
Assume that 
\begin{equation}\label{5.12}
\int_0^{a_\dagger} \beta(a) da \leq \dfrac{4}{\kappa}
\end{equation}
Then there exists $h_0>0$ such that for each $\varphi \in C$  we have 
$$
\widehat{v}_1(\varphi;h)\in C,\  \forall h\in (0,h_0).
$$
\end{lemma}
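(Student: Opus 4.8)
The plan is to verify directly that $\widehat{v}_1(\varphi;h)$ satisfies the two defining conditions of $C$ in \eqref{5.2}, namely that it is a non-negative $L^p$ function and that $0 \le \Theta(\widehat{v}_1(\varphi;h)(a)) \le \kappa$ for a.e. $a \ge 0$. The starting point is to exploit the membership $\varphi \in C$ to remove all the truncations: for a.e. $a$ one has $0 \le \varphi_i(a) \le \Theta(\varphi(a)) \le \kappa$ and $0 \le \kappa - \Theta(\varphi(a)) \le \kappa$, so that $\chi(\varphi_i(a)) = \varphi_i(a)$ and $\chi(\kappa - \Theta(\varphi(a))) = \kappa - \Theta(\varphi(a))$. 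Writing $w(a) := \Theta(\varphi(a)) \in [0,\kappa]$, this turns $F_1$ into $F_1(\varphi)_i(a) = -\mu(a)\varphi_i(a)(\kappa - w(a))$ and $F_0$ into $F_0(\varphi)_i = \int_0^{a_\dagger}\beta(a)\varphi_i(a)(\kappa - w(a))\,da$. I would then treat the two branches of \eqref{5.11} separately.

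For the branch $a \ge h$, substituting the simplified $F_1$ gives, componentwise,
$$
\widehat{v}_1(\varphi;h)_i(a) = \varphi_i(a-h)\big[\,1 - h\,\mu(a-h)\,(\kappa - w(a-h))\,\big].
$$
Since $\kappa - w \le \kappa$ and $\mu \le \Vert\mu\Vert_{\infty}$, the bracket lies in $[0,1]$ as soon as $h \le (\kappa\Vert\mu\Vert_{\infty})^{-1}$; this single smallness condition simultaneously yields non-negativity of every component and, after summation, $\Theta(\widehat{v}_1(\varphi;h)(a)) = w(a-h)\big[1 - h\mu(a-h)(\kappa - w(a-h))\big] \in [0, w(a-h)] \subseteq [0,\kappa]$. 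Note that the upper bound here is automatic because the subtracted term is non-negative, and the bracket being $\le 1$ also gives the $L^p$ control $\Vert\widehat{v}_1(\varphi;h)\Vert_{L^p(\{a\ge h\})} \le \Vert\varphi\Vert_{L^p}$.

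The branch $a < h$, where $\widehat{v}_1(\varphi;h)(a) = F_0(\varphi)$ is constant, is the crux and is the only place where the hypothesis \eqref{5.12} is used. Non-negativity of $F_0(\varphi)$ is clear. For the upper bound I would compute $\Theta(F_0(\varphi)) = \int_0^{a_\dagger}\beta(a)\,w(a)(\kappa - w(a))\,da$ and invoke the elementary inequality $w(\kappa - w) \le \kappa^2/4$, valid for all $w \in [0,\kappa]$ with maximum at $w = \kappa/2$; this gives $\Theta(F_0(\varphi)) \le \frac{\kappa^2}{4}\int_0^{a_\dagger}\beta(a)\,da \le \frac{\kappa^2}{4}\cdot\frac{4}{\kappa} = \kappa$. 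This is exactly what forces the threshold $4/\kappa$ in \eqref{5.12}, and it is the step where one must recognise the right quadratic bound rather than grind through estimates.

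Finally I would assemble the pieces: choosing $h_0 := (\kappa\Vert\mu\Vert_{\infty})^{-1}$, which is independent of $\varphi$, both branches satisfy $0 \le \Theta(\widehat{v}_1(\varphi;h)(a)) \le \kappa$ for every $h \in (0,h_0)$, each component is non-negative, and the $L^p$ norm is finite (being bounded by $\Vert\varphi\Vert_{L^p}$ on $\{a \ge h\}$ and by $h\,|F_0(\varphi)|^p$ on $\{a < h\}$). Hence $\widehat{v}_1(\varphi;h) \in C$ uniformly in $\varphi \in C$, as claimed.
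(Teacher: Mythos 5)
Your proposal is correct and follows essentially the same route as the paper: remove the truncations using $\varphi\in C$, factor the $a\ge h$ branch as $\varphi_i(a-h)\bigl[1-h\mu(a-h)(\kappa-\Theta(\varphi(a-h)))\bigr]$ with a smallness condition on $h$, and handle the $a<h$ branch via $s(\kappa-s)\le \kappa^2/4$ together with \eqref{5.12}. Your observation that the bracket lies in $[0,1]$ once $h\le(\kappa\Vert\mu\Vert_\infty)^{-1}$ replaces the paper's monotonicity argument for $s\mapsto[1-h\mu_-(\kappa-s)]s$ and is, if anything, a slightly cleaner way to get both the componentwise non-negativity and the bound $\Theta\le\kappa$ with an explicit $h_0$.
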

\begin{proof}
Let  $\varphi \in C$  be given. Since $\Theta$ is linear, if $a\geq h$ then by \eqref{5.11} we have
$$
\begin{array}{llll}
\Theta\left( \widehat{v}_1(\varphi;h)(a)\right)&=&\Theta\left( \varphi(a-h)-h \mu(a-h) \varphi(a-h) \left[\kappa-\Theta(\varphi(a-h))\right]\right)\\
&=& (1-h\mu(a-h) [\kappa-\Theta(\varphi(a-h)])\Theta(\varphi(a-h))\\
\end{array}
$$
hence 
$$
[1-h\Vert\mu \Vert_{\infty} (\kappa-\Theta(\varphi(a-h))]\Theta(\varphi(a-h)) \leq \Theta\left( \widehat{v}_1(\varphi;h)(a)\right) \leq [1-h \mu_-  (\kappa-\Theta(\varphi(a-h))] \Theta(\varphi(a-h))
$$
and since the map  $s\in [0,\kappa]\rightarrow   [1-h \mu_- (\kappa-s)] s$ is non decreasing for $h>0$ sufficiently small it follows that there exists $h_0>0$ depending only on $\kappa$ and  $\mu$  such that 
\begin{equation}\label{5.13}
0\leq \Theta\left( \widehat{v}_1(\varphi;h)(a)\right)\leq \kappa,\ \forall a\geq h,\ \forall h\in [0,h_0].
\end{equation}
For $0\leq a<h$ by using \eqref{5.11} we have 
$$
\begin{array}{llll}
\Theta\left( \widehat{v}_1(\varphi;h)(a)\right) &=& \Theta\left(F_0(\varphi)\right)\\
&=&\displaystyle \Theta\left(\int_0^{+\infty} \beta(a)\varphi(a)[\kappa-\Theta(\varphi(a))] da\right)\\
&=&\displaystyle\int_0^{a_\dagger} \beta(a) [\kappa-\Theta(\varphi(a))] \Theta\left(\varphi(a)\right)  da.
\end{array}
$$
Since the maximum of the map $s\in [0,\kappa]\rightarrow (\kappa-s) s$  is $\dfrac{\kappa^2}{4}$ we obtain that 
\begin{equation}\label{5.14}
0\leq \Theta\left( \widehat{v}_1(\varphi;h)(a)\right) \leq \dfrac{\kappa^2}{4} \int_0^{a_\dagger} \beta(a) da  ,\ \text{ if }  0\leq a <h,\ h>0
\end{equation}
Thus we see from \eqref{5.12} and \eqref{5.14} that 
\begin{equation}\label{5.15}
0\leq \Theta\left( \widehat{v}_1(\varphi;h)(a)\right) \leq  \kappa,\ \text{ if }  0\leq a <h,\ h>0.
\end{equation}
The result follows from \eqref{5.13} and \eqref{5.15}.
\end{proof}\\
We have the following result. 
\begin{theorem} \label{TH5.3}
Assume  that 
\begin{equation*}
\int_0^{a_\dagger} \beta(a) da \leq \dfrac{4}{\kappa}.
\end{equation*}
Then for each $u_0 \in L^p_+(\mathbb{R}_+,\mathbb{R}^n)$ with 
$$
0\leq \Theta(u_0(a))\leq \kappa,\ \text{ for a.e. } a\in \mathbb{R}_+
$$
there exists a unique continuous globally defined mild solution $t\in \mathbb{R}_+\rightarrow u(t,.)\in L^p_+(\mathbb{R}_+,\mathbb{R}^n)$ of \eqref{5.1}  such that 
\begin{equation}\label{5.16}
0\leq \Theta(u(t,a))\leq \kappa,\ \text{ for a.e.  } a\in \mathbb{R}_+,\ \forall t\geq 0.
\end{equation}
\end{theorem}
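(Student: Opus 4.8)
The plan is to solve everything for the truncated problem \eqref{5.3}, whose abstract form reads $v'(t)=Av(t)+F(v(t))$ with $F$ the truncated nonlinearity of this section, and then transport the conclusion back to \eqref{5.1} using that the two problems agree on $C$. The ingredients are already in place: $A$ obeys Assumptions \ref{ASS2.1} and \ref{ASS2.4}, and the truncated $F$ is Lipschitz on bounded sets, i.e. satisfies Assumption \ref{ASS3.1}, so Theorem \ref{TH3.3} furnishes a maximal semiflow $U$ on $X_0$ solving \eqref{5.3}.

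First I would establish global existence. Since $\chi$ is bounded by $\kappa$ and $1$-Lipschitz, the maps $F_0$ and $F_1$ are globally Lipschitz on $X_0$ with $F(0)$ finite; in particular $F$ has at most linear growth, $\Vert F(v)\Vert\leq C_0+C_1\Vert v\Vert$. Feeding this into the mild formulation $U(t,0)x=T_{A_0}(t)x+(S_A\diamond F(\cdot,U(\cdot,0)x))(t)$ and using the bound of Assumption \ref{ASS2.4} gives an a priori estimate on $\Vert U(t,0)x\Vert$ that remains finite on every bounded interval; by the blow-up alternative of Definition \ref{DE3.2}(iv) this forces $\tau(0,x)=+\infty$, so all solutions of \eqref{5.3} are global.

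Next I would apply the invariance theorem. The set $\mathcal{C}_0=\{0_{\mathbb{R}^n}\}\times C$ is closed in $X_0$ because the constraints $0\leq\Theta(\varphi(a))\leq\kappa$ are preserved under $L^p$ limits. Choosing $B=0$ in Assumption \ref{ASS4.4}, the subtangential condition collapses to \eqref{5.10}, which is exactly what Lemmas \ref{LE5.1} and \ref{LE5.2} deliver under hypothesis \eqref{5.12}. Hence Assumption \ref{ASS4.4} holds for $\mathcal{C}_0$, and Theorem \ref{TH4.6} applies: for $u_0\in C$ the global solution of \eqref{5.3} starting at $(0_{\mathbb{R}^n},u_0)$ stays in $\mathcal{C}_0$ for all $t\geq 0$, which is precisely \eqref{5.16} (and also gives $u(t,\cdot)\in L^p_+$).

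Finally I would identify the two problems on $C$ and conclude. If $\varphi\in C$ then, componentwise, $0\leq\varphi_i(a)\leq\Theta(\varphi(a))\leq\kappa$ and $0\leq\kappa-\Theta(\varphi(a))\leq\kappa$ almost everywhere, so $\chi(\varphi_i(a))=\varphi_i(a)$ and $\chi(\kappa-\Theta(\varphi(a)))=\kappa-\Theta(\varphi(a))$; thus the truncated $F$ coincides on $\mathcal{C}_0$ with the genuine nonlinearity of \eqref{5.1}. Consequently the invariant solution constructed above is a global mild solution of \eqref{5.1} obeying \eqref{5.16}. Uniqueness follows by the same identification: any mild solution of \eqref{5.1} valued in $C$ is a mild solution of \eqref{5.3}, so the uniqueness clause of Theorem \ref{TH3.3} forces it to coincide with the solution just produced. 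The main obstacle I anticipate is not any single estimate but the bookkeeping that links the pieces: verifying the linear-growth bound that excludes blow-up, and checking carefully that the invariance produced by Theorem \ref{TH4.6}, combined with the truncation identity, really yields a solution of the original non-truncated equation \eqref{5.1} rather than merely of the truncated \eqref{5.3}.
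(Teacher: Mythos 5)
Your proposal is correct and follows the same overall architecture as the paper: verify Assumption \ref{ASS4.4} for $\mathcal{C}_0=\{0_{\mathbb{R}^n}\}\times C$ via the decomposition $\widehat{v}=\widehat{v}_1+\widehat{v}_2$ of Lemmas \ref{LE5.1} and \ref{LE5.2} (with $B=0$, which is indeed how the paper uses the sub-tangential condition here), apply Theorem \ref{TH4.6} to the truncated system \eqref{5.3}, and then transport the conclusion to \eqref{5.1} by observing that the truncations act as the identity on $C$, which also gives uniqueness. The one place where you genuinely diverge is the exclusion of blow-up. The paper handles global existence by a positivity/comparison argument: it bounds $F$ on the positive cone $X_{0+}$ by a linear map (using $\chi\le\kappa$) and invokes Corollary 3.7 of Magal--Ruan (2009), i.e. domination of the solution by that of a linear problem. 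You instead extract the linear-growth estimate $\Vert F(v)\Vert\le C_0+C_1\Vert v\Vert$ (which holds since $\vert\chi(\varphi_i)\chi(\kappa-\Theta(\varphi))\vert\le\kappa\vert\varphi_i\vert$ pointwise) and run an a priori bound through the mild formulation using the $\delta(t)$ estimate of Assumption \ref{ASS2.4}. That works, but note the estimate $\Vert(S_A\diamond f)(t)\Vert\le\delta(t)\sup_{[0,t]}\Vert f\Vert$ is not of Gronwall type; you must first absorb the term $\delta(t_0)C_1\sup\Vert u\Vert$ for a fixed $t_0$ with $\delta(t_0)C_1<1$ and then iterate over intervals of length $t_0$ using the translation identity \eqref{2.5}, which yields an at most exponentially growing bound and hence $\tau(0,x)=+\infty$ by Definition \ref{DE3.2}(iv). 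Your route is more self-contained (it does not need the positivity comparison result from the cited reference), at the cost of this small iteration argument; the paper's route is shorter given the machinery it cites.
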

\begin{proof}
The existence of a maximally defined solution of \eqref{5.3}  satisfying \eqref{5.16} is direct application of Theorem \ref{TH4.6}. To obtain the global existence of the solution of \eqref{5.3}  it is enough to observe that 
$$
F\left( \begin{array}{cc}
0_{\mathbb{R}^n}\\
\varphi
\end{array}
\right)\leq \left( \begin{array}{cc}
\int_0^{+\infty} \beta(a)\kappa \varphi(a) da \\  
\mu(.)\kappa \varphi(.)
\end{array}
\right),\ \forall \left( \begin{array}{cc}
0_{\mathbb{R}^n}\\
\varphi
\end{array}
\right) \in X_{0+}:=\{0_{\mathbb{R}^n}\} \times L^p_+(\mathbb{R}_+,\mathbb{R}^n).
$$
and infer from \cite[Corollary 3.7]{Magal-Ruan09a}. The result follows by using the fact that system \eqref{5.1} coincides with \eqref{5.3} in $C$.
\end{proof}


\begin{thebibliography}{99}
\bibitem{Arendt87a}W. Arendt, Resolvent positive operators, \textit{Proc.\ London Math.\ Soc.} \textbf{54} (1987), 321-349.

\bibitem{Arendt87b}W. Arendt, Vector valued Laplace transforms and Cauchy
problems, \textit{Israel\ J.\ Math.} \textbf{59} (1987), 327-352.

\bibitem{Arendt01}W. Arendt, C. J. K. Batty, M. Hieber, and F. Neubrander,
\textit{Vector-Valued Laplace Transforms and Cauchy Problems}, Birkh\"{a}user, Basel, 2001.

\bibitem{Brezis}H. Brezis, On a characterization of flow-invariant sets, \textit{Communications on Pure and Applied Mathematics}, \textbf{23(2)} (1970), 261-263.

\bibitem{Dieudonne}  J. Dieudonne, \textit{Foundations of Modern Analysis}, Academic Press Inc. 1960

\bibitem{Hartman} P. Hartman, On invariant sets and on a theorem of Wazewski, \textit{Proceedings of the American Mathematical Society}, \textbf{32(2)} (1972), 511-520.

\bibitem{Hirsch-Smith} M. W. Hirsch and Hal Smith, Monotone dynamical systems, \textit{Handbook of differential equations: ordinary differential equations}. Vol. II, (2005), 239-357.


\bibitem{Magal-Ruan07} P. Magal, and S. Ruan, On Integrated Semigroups and Age Structured Models in $L^p$ Spaces, \textit{Differential and Integral Equations}, \textbf{20}  (2007), 197-139.

\bibitem{Magal-Ruan09a} P. Magal and S. Ruan, On Semilinear Cauchy Problems with Non-dense Domain, \textit{Advances in Differential Equations}, \textbf{14} (2009), 1041-1084.


\bibitem{Magal-Ruan2018} P. Magal and S. Ruan, \textit{Theory and Applications of Abstract Semilinear Cauchy Problems} Applied Mathematical Sciences, vol. \textbf{201},   Springer International Publishing (2018).

\bibitem{Martin79} R. H. Martin, A Maximum Principle for Semilinear Parabolic Systems, \textit{Proc. A.M.S.}, \textbf{74(1)} (1979), 66-70.


\bibitem{Martinsmith} R. Martin, H.~L. Smith, Abstract functional differential equations and reaction-diffusion systems, \textit{Trans. Amer. Math. Soc.}, \textbf{321} (1990), 1-44.



\bibitem{Nagumo} M. Nagumo, Uber die lage der integralkurven gewohnlicher differentialgleichungen, \textit{ Proc. Phys. Math. Soc. Japan }, \textbf{24} (1942), 551-559.

\bibitem{Pavel-Motreanu} N. H. Pavel and D. Motreanu,  \textit{Tangency, flow invariance for differential equations, and optimization problems}, vol. \textbf{219}, CRC Press (1999).

\bibitem{Redheffer-Walter75} R. Redheffer and W. Walter, Flow-invariant sets and differential inequalities in normed spaces. \textit{Applicable Analysis}, \textbf{5(2)} (1975), 149-161.

\bibitem{Redheffer-Walter77} R. Redheffer and W. Walter, Invariant sets for systems of partial differential equations I. Parabolic equations. \textit{Archive for Rational Mechanics and Analysis}, \textbf{67(1)} (1977), 41-52.


\bibitem{Thieme90a} H. R. Thieme, Semiflows generated by Lipschitz perturbations of non-densely defined operators, \textit{Differential Integral Equations} \textbf{3} (1990), 1035-1066.

\bibitem{Thieme08} H. R. Thieme, Differentiability of convolutions, integrated semigroups of bounded semi-variation, and the inhomogeneous Cauchy problem, \textit{J. Evol. Equ. } \textbf{8}\ (2008), 283-305.\

\bibitem{Walter71} W. Walter, Ordinary differential inequalities in ordered Banach spaces. \textit{Journal of differential equations}, \textbf{9(2)} (1971), 253-261.

\bibitem{Webb-85book} G. F. Webb, \textit{Theory of Nonlinear Age-dependent Population Dynamics}, Marcel Dekker, New York, 1985.


\end{thebibliography}
\end{document}